\def\beq{\begin{equation}}
\def\eeq{\end{equation}}
\def\ba{\begin{array}}
\def\ea{\end{array}}
\newtheorem{thm}{Theorem}[section]
\newtheorem{lm}[thm]{Lemma}
\newtheorem{prop}[thm]{Proposition}
\newtheorem{crl}[thm]{Corollary}
\theoremstyle{definition}
\newtheorem{rem}[thm]{Remark}
\newtheorem{df}[thm]{Definition}
\theoremstyle{remark}
\begin{document}
\pagestyle{plain}
\title{Fractional logarithmic Schr\"{o}dinger equations on lattice graphs}

\author{Lidan Wang}
\email{wanglidan@ujs.edu.cn}
\address{Lidan Wang: School of Mathematical Sciences, Jiangsu University, Zhenjiang 212013, People's Republic of China}

\thanks{
The author is supported by NSFC, no.12401135.}

\begin{abstract}
In this paper, we study the fractional logarithmic Schr\"{o}dinger equation
$$
(-\Delta)^{s} u+h(x) u=u \log u^{2}
$$
on lattice graphs $\mathbb{Z}^d$, where $s\in (0,1)$. If $h(x)$ is a bounded periodic potential, we prove the existence of ground state solution by mountain pass theorem and Lions lemma. If $h(x)$ is a coercive potential, we show the existence of ground state sign-changing solutions by the method of Nehari manifold.
\end{abstract}

\maketitle

{\bf Keywords:} lattice graphs, fractional Laplace operator, fractional logarithmic Schr\"{o}dinger equations, ground state sign-changing solutions

\
\

{\bf Mathematics Subject Classification 2020:} 35A15, 35R11, 35R02.

\section{Introduction}
In recent years, a great deal of mathematical effort has been devoted to the nonlinear logarithmic Schr\"{o}dinger equation
$$
(-\Delta)^{s} u+h(x) u=u \log u^{2}
$$
on Euclidean space $\mathbb{R}^d$. In particular, for $s=1$, the classical logarithmic Schr\"{o}dinger equation has been studied extensively in the literature, see for examples \cite{AD,AJ1,FT,J0,S0}. For $s\in(0,1)$, this equation becomes the fractional logarithmic Schr\"{o}dinger equation, we refer the readers to \cite{A,DS,LP,JX,WF}.

Nowadays, people paid much attention to the differential equations on graphs, see for examples\cite{GLY,HL0,HLW,W2,W3,ZZ}. In particular, for the discrete nonlinear logarithmic Schr\"{o}dinger equation, we refer the readers to \cite{CWY,CR,HJ,OZ,SY}. It is worth noting that Chang et al. \cite{CR} proved the existence of ground state sign-changing solutions to the classical logarithmic Schr\"{o}dinger equation. 

Recently,  a great interest has been focused on the study of problems involving the fractional Laplace
operators on graphs. Similar to the continuous case, the discrete 
fractional Laplace operators $(-\Delta)^{s}$ can also be defined in several ways, we refer the readers to \cite{CR0,LM,LR,W0,ZLY}. Based on the definition of pointwise nonlocal formula for $(-\Delta)^{s}$, Xiang and Zhang \cite{XZ} proved the existence of nontrivial homoclinic solutions to a fractional Schr\"{o}dinger equations on lattice graph $\mathbb{Z}$. By the definition through Fourier transform, Wang \cite{W1} established the existence of ground sate solutions to a class of fractional Schr\"{o}dinger equations on lattice graph $\mathbb{Z}^d$. Very recently, Zhang, Lin and Yang \cite{ZLY} obtained the existence of ground state solutions to a fractional Schr\"{o}dinger equation on general weighted graphs by a new definition of fractional Laplace operator.

To the best of our knowledge, there is no existence results for the fractional logarithmic  Schr\"{o}diner equations on graphs. Motivated by the works mentioned above, in this paper, we consider the discrete fractional logarithmic  equation and study the existence of ground state (sign-changing) solutions under different assumptions on potential function.

Let us first give some notations. Let $G=(V, E)$ be a connected, locally finite graph, where $V$ denotes the vertex set and $E$ denotes the edge set. We call vertices $x$ and $y$ neighbors, denoted by $x \sim y$, if there exists an edge connecting them, i.e. $(x, y) \in E$. For any $x,y\in V$, the distance $d(x,y)$ is defined as the minimum number of edges connecting $x$ and $y$, namely
$$d(x,y)=\inf\{k:x=x_0\sim\cdots\sim x_k=y\}.$$
In the following, we consider, the natural discrete model of the Euclidean space, the integer lattice graph.  The $d$-dimensional integer lattice graph, denoted by $\mathbb{Z}^d$, consists of the set of vertices $V=\mathbb{Z}^d$ and the set of edges $E=\{(x,y): x,\,y\in \mathbb{Z}^d,\,\underset {{i=1}}{\overset{d}{\sum}}|x_{i}-y_{i}|=1\}.$
In the sequel, we denote $|x-y|:=d(x,y)$ on the lattice graph $\mathbb{Z}^d$. Let $C(\mathbb{Z}^{d})$ be the set of all functions on $\mathbb{Z}^{d}$ and $C_{c}(\mathbb{Z}^{d})$ the set of all functions with finite support on $\mathbb{Z}^{d}$.  For $p\in [1,+\infty)$,  $\ell^p(\mathbb{Z}^{d})$ is a linear function space with finite norm $
\|u\|_{p}= (\sum\limits_{x \in \mathbb{Z}^d}|u(x)|^{p})^{\frac{1}{p}}$, and $\ell^\infty(\mathbb{Z}^{d})$ is a linear space of all bounded functions with finite norm $\|u\|_\infty= \sup\limits_{x \in \mathbb{Z}^d}|u(x)|.$

In this paper, we study the fractional logarithmic equation
\begin{equation}\label{0.2}
(-\Delta)^{s} u+h(x)u=u \log u^{2}
\end{equation}
on lattice graphs $\mathbb{Z}^d$. Here the fractional Laplace operator $(-\Delta)^{s}$ with $s\in (0,1)$ is given by, see \cite{W0,ZLY},
$$(-\Delta)^{s} u(x)=\sum_{\substack{y\in\mathbb{Z}^d, y\neq x}} w_{s}(x, y)(u(x)-u(y)),\quad u\in\ell^{\infty}(\mathbb{Z}^d),$$
where $w_{s}(x, y)$ is a symmetric positive function satisfying
\begin{equation}\label{28}
c_{s,d}|x-y|^{-d-2s}\leq w_s(x,y)\leq C_{s,d}|x-y|^{-d-2s},\quad x\neq y.
\end{equation}
We always assume that the potential $h$ satisfies 
\begin{itemize}
    \item[($h_1$)] there exists $h_0\in(-1,0)$ such that $\inf\limits_{x\in\mathbb{Z}^d}h(x)\geq h_0;$
   \item[($h_2$)] $h(x)$ is a bounded periodic function for $x\in \mathbb{Z}^d;$
\item[($h_3$)] $h(x)\rightarrow+\infty$ as $|x-x_0|\rightarrow+\infty$ for some $x_0\in\mathbb{Z}^d$.
\end{itemize}

Before we introduce a fractional Sobolev space, we give the definition of fractional gradient form. For $u,v \in \ell^{\infty}(\mathbb{Z}^d)$, the fractional gradient form is given by
\begin{equation*}
\nabla^{s} u \nabla^{s} v(x)=\frac{1}{2} \sum_{y \in \mathbb{Z}^d, y \neq x} w_{s}(x, y)\left(u(x)-u(y)\right)\left(v(x)-v(y)\right). 
\end{equation*}
We denote the length of the fractional gradient as $$\left|\nabla^{s} u(x)\right|=\left(\frac{1}{2} \sum\limits_{y \in \mathbb{Z}^d, y \neq x}w_{s}(x, y)(u(x)-u(y))^{2}\right)^{\frac{1}{2}}.$$ 
For convenience, for $u\in C(\mathbb{Z}^{d})$, we always write $
\int_{\mathbb{Z}^{d}} u(x)\,d \mu=\sum\limits_{x \in \mathbb{Z}^{d}} u(x),
$
where $\mu$ is the counting measure on $\mathbb{Z}^{d}$.

Now we give a fractional Sobolev space
$$W^{s,2}(\mathbb{Z}^d)=\{u\in\ell^\infty(\mathbb{Z}^d): \int_{\mathbb{Z}^d}\left(|\nabla^s u|^{2}+u^{2}\right) d \mu<+\infty\}$$
equipped with the norm
$$
\|u\|_{s,2}=\left(\int_{\mathbb{Z}^d}\left(|\nabla^s u|^{2}+u^{2}\right) d \mu\right)^{\frac{1}{2}}.
$$
Then by \cite[Theorem 2.3]{ZLY}, one gets that the space $W^{s,2}(\mathbb{Z}^d)$ is a Hilbert space with the inner product
$$
(u, v)_{s,2}=\int_{\mathbb{Z}^d}(\nabla^s u \nabla^s v+u v)\,d \mu.
$$
Under the assumption $(h_1)$, we define a new subspace
$$
\mathcal{H}_{s}=\left\{u \in W^{s,2}(\mathbb{Z}^d): \int_{\mathbb{Z}^d} (h(x)+1) u^{2}\,d \mu<\infty\right\}
$$
under the norm
$$
\|u\|_{\mathcal{H}_{s}}^{2}=\int_{\mathbb{Z}^d}\left(|\nabla^s u|^{2}+(h(x)+1)u^{2}\right)\,d \mu,
$$
which is induced by the inner product
$$
(u, v)_{\mathcal{H}_{s}}=\int_{\mathbb{Z}^d}(\nabla^s u \nabla^s v+(h(x)+1)u v)\,d \mu.
$$
Clearly, the space $\mathcal{H}_{s}$ is also a Hilbert space. By $(h_1)$, we have that $$\int_{\mathbb{Z}^d}u^2\,d\mu\leq \frac{1}{(h_0+1)}\int_{\mathbb{Z}^d}(h(x)+1)u^2\,d\mu,$$
and hence, for $q\geq 2$,
\begin{equation}\label{ac}
    \|u\|_q\leq \|u\|_2\leq \|u\|_{s,2}\leq \frac{1}{\sqrt{(h_0+1)}}\|u\|_{\mathcal{H}_s}.
\end{equation}

\
\

The energy functional $J:\mathcal{H}_s\rightarrow\mathbb{R}\cup \{+\infty\}$ related to the equation (\ref{0.2}) is given by
\begin{equation*}
J(u)=\frac{1}{2} \int_{\mathbb{Z}^d}\left(|\nabla^s u|^{2}+(h(x)+1)u^{2}\right) d \mu-\frac{1}{2} \int_{\mathbb{Z}^d}u^{2} \log u^{2} d \mu. 
\end{equation*}
Since there exists $u\in W^{s,2}(\mathbb{Z}^d)$ such that $\int_{\mathbb{Z}^d}u^{2} \log u^{2} d \mu=-\infty,$ see "Appendix" below, the functional $J$ fails to be $C^1$ on $\mathcal{H}_s.$ Therefore, we consider the functional $J$ on the set
$$D(J)=\{u\in\mathcal{H}_s: \int_{\mathbb{Z}^d}u^{2} |\log u^{2}|\,d \mu <+\infty\}.$$
Given $u\in D(J)$, for any $\phi\in D(J)$, one can get easily that
$$\langle J'(u),\phi\rangle=\int_{\mathbb{Z}^d}\left( \nabla^s u \nabla^s \phi+h(x)u \phi\right)\,d \mu-\int_{\mathbb{Z}^d} u\phi\log u^2\,d\mu.$$
We say that $u\in D(J)$ is a weak solution of the equation (\ref{0.2}) if $u$ is a critical point of $J$, i.e. $\langle J'(u),\phi\rangle=0$ for all $\phi\in D(J)$.
Moreover, we define the Nehari manifold $$\mathcal{N}=\left\{u \in D(J)\backslash\{0\}: \langle J^{\prime}(u),u\rangle=0\right\}$$
and the sign-changing Nehari manifold
$$\mathcal{M}=\left\{u \in D(J): u^{ \pm} \neq 0 \text { and } \langle J^{\prime}(u), u^{+}\rangle=\langle J^{\prime}(u),u^{-}\rangle=0\right\},$$
where $u^{+}=\max \{u, 0\}$ and $u^{-}=\min \{u, 0\}.$
Let
$$c=\inf\limits_{u\in\mathcal{N}}J(u), \quad \text{and}\quad m=\inf\limits_{u\in \mathcal{M}}J(u).$$
We say that $u\in D(J)$ is a ground state solution if
$J(u)=c$ and a ground state sign-changing solution if $J(u)=m.$

Our main results are as follows.

\begin{thm}\label{t1}
Let $(h_1)$ and $(h_2)$ hold. Then the equation (\ref{0.2}) has a ground state solution.
\end{thm}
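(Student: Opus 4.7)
The plan is a mountain pass argument on $\mathcal{H}_s$ combined with a translation-based compactness recovery exploiting the periodicity $(h_2)$. The first issue, which the authors flag, is that $J$ fails to be $C^1$ on $\mathcal{H}_s$ because $u^2\log u^2$ is singular near $u=0$. I would adopt the Szulkin non-smooth framework, decomposing $J=\Phi+\Psi$ where $\Phi$ is $C^1$ on $\mathcal{H}_s$ and $\Psi$ is convex, proper, and lower semicontinuous: concretely, $\Psi(u)=\tfrac12\int F_2(u)\,d\mu$ isolates the non-smooth behavior on $\{|u|\leq\delta\}$ via a truncation $F_2$, while $\Phi$ absorbs the smooth remainder and the quadratic part. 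Critical points of $J$ in Szulkin's sense then coincide with weak solutions of (\ref{0.2}) in $D(J)$.

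Second, I would verify the mountain pass geometry for $J$. Using (\ref{ac}), the pointwise estimate $t^2|\log t^2|\leq C_\varepsilon(t^{2-\varepsilon}+t^{2+\varepsilon})$, and the discrete embedding $\ell^p\hookrightarrow\ell^q$ for $2\leq p\leq q$ on $\mathbb{Z}^d$, one shows $J(u)\geq\alpha>0$ on a small sphere $\{\|u\|_{\mathcal{H}_s}=\rho\}$. For the downhill direction, fix $\phi\in C_c(\mathbb{Z}^d)\setminus\{0\}$; then $J(t\phi)\to-\infty$ as $t\to+\infty$, since the term $-\tfrac{t^2}{2}(\log t^2)\int\phi^2\,d\mu$ dominates the quadratic parts. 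This sets the minimax level
$$
c_{MP}=\inf_{\gamma\in\Gamma}\max_{t\in[0,1]}J(\gamma(t))>0,
$$
where $\Gamma$ is the usual class of paths from $0$ to a point with $J<0$. A Cerami sequence $\{u_n\}\subset D(J)$ at level $c_{MP}$ satisfies
$$
\|u_n\|_2^2=2J(u_n)-\langle J'(u_n),u_n\rangle\to 2c_{MP},
$$
and a logarithmic Sobolev-type bound on $\mathbb{Z}^d$ combined with (\ref{ac}) yields boundedness of $\|u_n\|_{\mathcal{H}_s}$.

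Third, and the step I expect to be the main obstacle, is non-vanishing and identification of the weak limit. A discrete Lions lemma asserts that if $\{u_n\}$ is bounded in $\ell^2(\mathbb{Z}^d)$ and $\sup_{y\in\mathbb{Z}^d}|u_n(y)|\to 0$, then $u_n\to 0$ in $\ell^q$ for every $q>2$, via $\|u_n\|_q^q\leq\|u_n\|_\infty^{q-2}\|u_n\|_2^2$. The delicacy is that the log nonlinearity is not controlled by a single $\ell^q$ norm; one must exploit the splitting $J=\Phi+\Psi$ together with the a priori bound $\int u_n^2|\log u_n^2|\,d\mu<\infty$ from $u_n\in D(J)$ to conclude that vanishing forces $\|u_n\|_{\mathcal{H}_s}\to 0$ and hence $c_{MP}=0$, a contradiction. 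So there exist $\delta>0$ and $y_n\in\mathbb{Z}^d$ with $|u_n(y_n)|\geq\delta$. By $(h_2)$, one reduces $y_n$ modulo the period of $h$ so that $\tilde u_n(x):=u_n(x+y_n)$ is still a Cerami sequence for the \emph{same} functional $J$ (this uses that $w_s(x,y)$ depends only on $x-y$ and is thus invariant under integer translations). Passing to a subsequence, $\tilde u_n\rightharpoonup u$ weakly in $\mathcal{H}_s$ with $u(0)\neq 0$, hence $u\neq 0$. Passing to the limit in $\langle J'(\tilde u_n),\phi\rangle=o(1)$ for each $\phi\in C_c(\mathbb{Z}^d)$ by pointwise convergence and discrete dominated convergence gives $J'(u)=0$, and Fatou's lemma applied to the non-negative parts of $J$ together with the mountain-pass/Nehari identification $c_{MP}=c$ (a consequence of strict concavity of $t\mapsto J(tu)/t^2$ on $(0,\infty)$, which gives a unique Nehari projection along each ray) yields $J(u)=c$, so $u$ is a ground state solution of (\ref{0.2}).
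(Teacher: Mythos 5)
Your proposal is correct and follows essentially the same route as the paper: the Szulkin decomposition of $J$ into a $C^1$ part plus a convex lower semicontinuous part (the paper's $J_1+J_2$ built from the truncations $A$ and $B$), mountain pass geometry, a bounded $(PS)$ sequence via $\|u_k\|_2^2=2J(u_k)-\langle J'(u_k),u_k\rangle$, the discrete Lions lemma to exclude vanishing, translation using the periodicity of $h$, and identification of the weak limit through $C_c(\mathbb{Z}^d)$-density and Fatou. The one point to repair is the small-sphere estimate: you should use only the one-sided bound $(t^2\log t^2)^+\leq C_p|t|^{p}$ for some $p>2$ (as the paper does via $B(t)\leq C|t|^p$ and $A\geq 0$), since the $|t|^{2-\varepsilon}$ term in your two-sided bound $t^2|\log t^2|\leq C_\varepsilon(|t|^{2-\varepsilon}+|t|^{2+\varepsilon})$ would require controlling $\|u\|_{2-\varepsilon}$, which is not dominated by $\|u\|_{\mathcal{H}_s}$ on $\mathbb{Z}^d$.
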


\begin{thm}\label{t2}
Let $(h_1)$ and $(h_3)$ hold. Then the equation (\ref{0.2}) has a ground state sign-changing solution.
\end{thm}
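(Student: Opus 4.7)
The plan is to use the sign-changing Nehari manifold method: exploit compactness from the coercive potential, project via a joint fiber map, minimize $J$ on $\mathcal{M}$, and then show the minimizer is a weak solution despite $J\notin C^1$. As a first step, under $(h_3)$ I would prove that $\mathcal{H}_s\hookrightarrow\ell^p(\mathbb{Z}^d)$ is compact for every $p\in[2,\infty]$: coercivity makes $\int_{|x-x_0|\geq R}u_n^2\,d\mu$ uniformly small for large $R$, and on the finite set $B_R(x_0)$ a diagonal subsequence converges pointwise and hence in every $\ell^p$ by (\ref{ac}). This compactness, rather than the Lions lemma used for Theorem~\ref{t1}, drives all limit passages below.

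For $u\in D(J)$ with $u^{\pm}\neq 0$ I would examine the fiber map $\varphi_u(t,\tau):=J(tu^++\tau u^-)$ on $(0,\infty)^2$. The stationarity conditions $\partial_t\varphi_u=\partial_\tau\varphi_u=0$ couple $t$ and $\tau$ through the nonlocal cross term
\begin{equation*}
c(u):=\int_{\mathbb{Z}^d}\nabla^s u^+\,\nabla^s u^-\,d\mu \;=\; -\sum_{x\neq y} w_s(x,y)\,u^+(x)\,u^-(y) \;>\; 0,
\end{equation*}
which is strictly positive because $w_s>0$ from (\ref{28}) and $u^{\pm}\neq 0$. This coupling is the principal obstacle versus the local case $s=1$: one cannot optimize independently in $t$ and $\tau$. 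I would circumvent it by fixing $\tau$ and solving for $t$: the $t$-equation has strictly increasing left side (containing $\|u^+\|_2^2\log t^2$) and strictly decreasing right side (containing $\tau\,c(u)/t$), yielding a unique root $t(\tau)\in(0,\infty)$; substituting into the $\tau$-equation gives a scalar monotone equation in $\tau$ whose unique solution $\tau_u$ determines $t_u:=t(\tau_u)$. A direct computation using the stationarity relation shows $\partial_t^2\varphi_u(t_u,\tau_u)=-2\|u^+\|_2^2-\tau_u c(u)/t_u<0$ and $\det\mathrm{Hess}\,\varphi_u(t_u,\tau_u)>0$, so $(t_u,\tau_u)$ is the unique global maximum.

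I would then take a minimizing sequence $\{u_n\}\subset\mathcal{M}$ for $m$. The identity $\langle J'(u_n),u_n\rangle=0$ on $\mathcal{M}$ reduces $J(u_n)$ to $\tfrac12\|u_n\|_2^2$ and also yields $\|u_n\|_{\mathcal{H}_s}^2=\|u_n\|_2^2+\int u_n^2\log u_n^2\,d\mu$; combining with $\log t^2\leq t^2-1$ and (\ref{ac}) gives $\|u_n\|_{\mathcal{H}_s}^2\leq\|u_n\|_2^4$, hence boundedness in $\mathcal{H}_s$. The compactness from Step~1 produces $u_0$ with $u_n\to u_0$ in every $\ell^p$ ($p\geq 2$) and weakly in $\mathcal{H}_s$. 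A uniform positive lower bound on $\|u_n^{\pm}\|_2$, obtained by applying the Nehari identity to $u_n^{\pm}$ and using the standard logarithmic estimate $t^2|\log t^2|\leq C_\varepsilon(t^{2-\varepsilon}+t^{2+\varepsilon})$, forces $u_0^{\pm}\neq 0$. Setting $v_0:=t_{u_0}u_0^++\tau_{u_0}u_0^-\in\mathcal{M}$, weak lower semicontinuity of $\|\cdot\|_{\mathcal{H}_s}$ together with strong $\ell^p$-convergence of the logarithmic integrals yields
\begin{equation*}
m \;\leq\; J(v_0) \;\leq\; \liminf_{n\to\infty} J(t_{u_0}u_n^++\tau_{u_0}u_n^-) \;\leq\; \liminf_{n\to\infty} J(u_n) \;=\; m,
\end{equation*}
where the middle inequality uses the strict maximum at $(1,1)$ from Step~2 applied to each $u_n\in\mathcal{M}$. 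Thus $m$ is attained at $v_0$.

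Finally, because $J\notin C^1(\mathcal{H}_s)$, proving that $v_0$ is a weak solution requires the quantitative deformation argument used in \cite{CR}: if $\langle J'(v_0),\phi\rangle<0$ for some $\phi\in D(J)$, perturbing $v_0$ along $\phi$ and reprojecting onto $\mathcal{M}$ via the fiber map from Step~2 would produce an element of $\mathcal{M}$ with $J$-value strictly below $m$, contradicting the minimality. The two delicate points are therefore the uniqueness analysis of the coupled fiber system (where the cross term $c(u)$ enters) and the deformation-reprojection step (where one must verify continuity of $(u^+,u^-)\mapsto (t_u,\tau_u)$ through the implicit equation near $v_0$).
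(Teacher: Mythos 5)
Your proposal follows the same overall strategy as the paper: compact embedding from the coercive potential (the paper quotes this as Lemma \ref{l2} from \cite{ZLY}, you reprove it), unique projection of $u^\pm$ onto the sign-changing Nehari manifold $\mathcal{M}$, minimization of $J$ on $\mathcal{M}$, and a perturbation--reprojection (deformation) argument to show the minimizer solves the equation despite $J\notin C^1$ (this is the paper's Lemma \ref{l8}). The genuine difference is in how the projection is handled. The paper gets existence of $(\alpha_u,\beta_u)$ from Miranda's theorem and gets both uniqueness and the global comparison $J(\alpha u^++\beta u^-)<J(u)$ on $\mathcal{M}$ from the explicit identity of Lemma \ref{l3}, namely $J(u)-J(\alpha u^++\beta u^-)=-\tfrac{(\alpha-\beta)^2}{4}K(u)+\tfrac{(1-\alpha^2)+\alpha^2\log\alpha^2}{2}\|u^+\|_2^2+\tfrac{(1-\beta^2)+\beta^2\log\beta^2}{2}\|u^-\|_2^2>0$. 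You instead solve the coupled stationarity system by a monotone reduction: your computation that $t(\tau)$ is well defined and that $t(\tau)/\tau$ is strictly decreasing does go through (one checks $t'(\tau)=c\,t/(2\|u^+\|_2^2 t+\tau c)$, whence $\frac{d}{d\tau}(t/\tau)<0$), so existence and uniqueness of the critical pair are obtained without Miranda. Your final minimization chain also differs slightly from the paper's: the paper passes to the limit only in the quantity $J(v)-\tfrac12\langle J'(v),v\rangle=\tfrac12\|v\|_2^2$ and uses Lemma \ref{l5} ($\alpha_u,\beta_u\le 1$ when $\langle J'(u),u^\pm\rangle\le 0$), which avoids invoking lower semicontinuity of $J$ itself; your route needs the splitting of $\int u^2\log u^2$ into its positive part (handled by strong $\ell^q$ convergence) and negative part (Fatou), which works but should be said.

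The one step you must strengthen is the claim that $(t_u,\tau_u)$ is the \emph{global} maximum of $\varphi_u$ on $(0,\infty)^2$. A negative-definite Hessian at a unique interior critical point does not by itself exclude that the supremum is approached on the boundary $\{t=0\}\cup\{\tau=0\}$, where $\varphi_u$ has finite limits $J(\tau u^-)$, $J(tu^+)$; and this global maximality is load-bearing, since your middle inequality $J(t_{u_0}u_n^++\tau_{u_0}u_n^-)\le J(u_n)$ uses that $(1,1)$ globally maximizes $\varphi_{u_n}$ for $u_n\in\mathcal{M}$. The fix is short: by Corollary \ref{co}(i), $\varphi_u(t,\tau)=J(tu^+)+J(\tau u^-)-\tfrac{t\tau}{2}K(u)\to-\infty$ as $t+\tau\to\infty$, and since $-K(u)\ge0$ and $J(tu^+)>0$ for small $t>0$, any boundary point is beaten by a nearby interior point, so the maximum over the closed quadrant is interior and hence is the unique critical point. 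Alternatively, simply prove the paper's explicit identity, which delivers the strict global inequality in one computation and is also exactly what the deformation step in Lemma \ref{l8} consumes. With that repair, your argument is complete and equivalent in substance to the paper's.
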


\begin{rem}
 One of the main challenges in proving Theorem \ref{t1}  and Theorem \ref{t2} is to handle the logarithmic term in the equation (\ref{0.2}). In the continuous case, the fractional logarithmic Sobolev inequality plays a key role in studying fractional logarithmic Schr\"{o}dinger equations, see \cite{HL1,LP}. In the discrete case, our arguments do not rely on the fractional logarithmic Sobolev inequality.
\end{rem}

This paper is organized as follows. In Section 2, we state some basic results on graphs. In Section 3, we prove the existence of ground state solutions to the equation (\ref{0.2})(Theorem \ref{t1}). In Section 4, we show the existence of ground state sign-changing solutions to the equation (\ref{0.2})(Theorem \ref{t2}).

\section{Preliminaries}
In this section, we give some basic lemmas that useful in this paper.

\begin{lm}\label{lb}
 The space $W^{s,2}(V)$ and $\ell^2(V)$  are equivalent. 
\end{lm}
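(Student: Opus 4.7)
The plan is to establish that the two norms $\|\cdot\|_{s,2}$ and $\|\cdot\|_2$ are equivalent on $C_c(\mathbb{Z}^d)$ (and in fact on the larger space of interest), from which the equality of the spaces follows. One inclusion is essentially free: for any $u \in W^{s,2}(\mathbb{Z}^d)$, the definition immediately gives $\|u\|_2 \leq \|u\|_{s,2}$, so $W^{s,2}(\mathbb{Z}^d) \hookrightarrow \ell^2(\mathbb{Z}^d)$. Conversely, given $u \in \ell^2(\mathbb{Z}^d)$, the pointwise bound $|u(x)|^2 \leq \|u\|_2^2$ shows $u \in \ell^\infty(\mathbb{Z}^d)$, so the only real work is to control the fractional gradient term by the $\ell^2$ norm.

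The key step is to show that
\begin{equation*}
\int_{\mathbb{Z}^d} |\nabla^s u|^2\, d\mu \;=\; \tfrac{1}{2}\sum_{x}\sum_{y\neq x} w_s(x,y)\bigl(u(x)-u(y)\bigr)^2 \;\leq\; C\,\|u\|_2^2
\end{equation*}
for some constant $C=C(s,d)$. I would expand $(u(x)-u(y))^2 \leq 2(u(x)^2 + u(y)^2)$, use the symmetry $w_s(x,y)=w_s(y,x)$ to swap the order of summation on the cross term, and reduce the estimate to showing the uniform bound
\begin{equation*}
K := \sup_{x \in \mathbb{Z}^d} \sum_{y \neq x} w_s(x,y) \;<\; +\infty.
\end{equation*}

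To establish this bound, I would invoke the upper inequality in \eqref{28}, namely $w_s(x,y) \leq C_{s,d}|x-y|^{-d-2s}$, and then partition the sum according to the graph distance $|x-y|=k$. Since the number of lattice points $y\in\mathbb{Z}^d$ with $|x-y|=k$ grows polynomially in $k$ of order $k^{d-1}$ (this is the cardinality of the $\ell^1$-sphere of radius $k$ in $\mathbb{Z}^d$), one obtains
\begin{equation*}
\sum_{y\neq x} w_s(x,y) \;\leq\; C_{s,d}\sum_{k=1}^{\infty} c_d\, k^{d-1}\cdot k^{-d-2s} \;=\; C \sum_{k=1}^{\infty} k^{-1-2s} \;<\; +\infty,
\end{equation*}
where finiteness uses $s\in(0,1)$, hence $2s>0$. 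This yields $\int |\nabla^s u|^2\,d\mu \leq 4K\|u\|_2^2$, so $\|u\|_{s,2}^2 \leq (1+4K)\|u\|_2^2$.

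There is no serious obstacle here; the only point requiring care is the counting of lattice points at graph distance $k$, which one should justify by an explicit combinatorial estimate on $|\{y\in\mathbb{Z}^d : |y|=k\}|$. Combining the two inclusions and the two-sided norm bound $\|u\|_2 \leq \|u\|_{s,2} \leq \sqrt{1+4K}\,\|u\|_2$ yields the equivalence of $W^{s,2}(\mathbb{Z}^d)$ and $\ell^2(\mathbb{Z}^d)$ as normed spaces.
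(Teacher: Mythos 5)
Your proof is correct and follows essentially the same route as the paper: the embedding $W^{s,2}\hookrightarrow\ell^2$ is immediate from the definition of the norm, and the reverse direction reduces, via $(u(x)-u(y))^2\le 2(u(x)^2+u(y)^2)$ and the symmetry of $w_s$, to the uniform summability $\sup_x\sum_{y\ne x}w_s(x,y)<\infty$, which both arguments obtain from the upper bound in \eqref{28} and the convergence of $\sum_{z\ne 0}|z|^{-d-2s}$. Your write-up in fact supplies two details the paper leaves implicit — the verification that an $\ell^2$ function lies in $\ell^\infty$ (so that $\nabla^s u$ is defined), and the lattice-point count $|\{y:|x-y|=k\}|=O(k^{d-1})$ justifying the convergence of the weight sum — so no changes are needed.
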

\begin{proof}
Clearly, for $u\in W^{s,2}(V)$, we have that $u\in\ell^2(V),$ namely
$$\|u\|_2\leq \|u\|_{s,2}<+\infty.$$

On the other hand, for $u\in\ell^2(V)$, by (\ref{28}), we get that
 $$
\begin{aligned}
    \int_{\mathbb{Z}^d} |\nabla^s u|^2\,d\mu=&\frac{1}{2}\sum\limits_{x\in\mathbb{Z}^d} \sum\limits_{y \in \mathbb{Z}^d, y \neq x}w_{s}(x, y)(u(y)-u(x))^{2}\\ \leq& C_{s,d}\sum\limits_{x\in\mathbb{Z}^d} \sum\limits_{y \in \mathbb{Z}^d, y \neq x}\frac{|u(x)|^2+|u(y)|^2}{|x-y|^{d+2s}}\\=&C_{s,d}\sum\limits_{z\neq 0}\frac{|u(0)|^2+|u(z)|^2}{|z|^{d+2s}}+C_{s,d}\sum\limits_{x\neq 0} \sum\limits_{z\neq 0}\frac{|u(x)|^2+|u(x+z)|^2}{|z|^{d+2s}}\\ \leq &3 C_{s,d}\sum\limits_{z\neq 0}\frac{1}{|z|^{d+2s}}\sum\limits_{x\in\mathbb{Z}^d}|u(x)|^2\\ \leq &C\sum\limits_{x\in\mathbb{Z}^d}|u(x)|^2\\ <&+\infty,
\end{aligned}
$$
where $\sum\limits_{z\neq 0}\frac{1}{|z|^{d+2s}}<+\infty.$ Thus $u\in W^{s,2}(V)$ and
$\|u\|_{s,2}\leq \sqrt{C+1}\|u\|_2.$

Therefore we have that
$$\|u\|_2\leq \|u\|_{s,2}\leq \sqrt{C+1}\|u\|_2.$$
Note that $C_c(V)$ is dense in $W^{s,2}(V)$ and $\ell^2(V)$. Then we obtain that $W^{s,2}(V)$ and $\ell^2(V)$ are equivalent.
\end{proof}

\begin{lm}\label{lp}
 Let $(h_1)$ and $(h_2)$ hold. Then $\mathcal{H}_s(V)$ and $\ell^2(V)$ are equivalent.  \end{lm}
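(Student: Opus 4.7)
The plan is to show two-sided comparability of the $\mathcal{H}_s$-norm with the $\ell^2$-norm, using Lemma \ref{lb} to handle the fractional gradient piece and the pointwise bounds on $h(x)+1$ coming from $(h_1)$ and $(h_2)$.

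First I would record the bracketing of the potential weight. By $(h_1)$, $h(x)+1\geq h_0+1>0$, and by $(h_2)$ the function $h$ is bounded on $\mathbb{Z}^d$, so there is a constant $M>0$ with $h(x)+1\leq M$ for every $x$. Hence, pointwise,
\[
(h_0+1)\,u(x)^2 \;\leq\; (h(x)+1)\,u(x)^2 \;\leq\; M\,u(x)^2.
\]

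Next I would establish the continuous embedding $\mathcal{H}_s(V)\hookrightarrow \ell^2(V)$. Discarding the fractional gradient term yields
\[
\|u\|_{\mathcal{H}_s}^2 \;\geq\; (h_0+1)\|u\|_2^2,
\]
so $\|u\|_2\leq (h_0+1)^{-1/2}\|u\|_{\mathcal{H}_s}$, which already appears in (\ref{ac}). For the reverse embedding, I would invoke Lemma \ref{lb}: since $\|u\|_{s,2}^2=\|\nabla^s u\|_2^2+\|u\|_2^2\leq (C+1)\|u\|_2^2$, one has $\|\nabla^s u\|_2^2\leq C\|u\|_2^2$. Combined with the upper bound on $h(x)+1$ this gives
\[
\|u\|_{\mathcal{H}_s}^2 \;=\; \|\nabla^s u\|_2^2 + \int_{\mathbb{Z}^d}(h(x)+1)u^2\,d\mu \;\leq\; C\|u\|_2^2 + M\|u\|_2^2 \;=\; (C+M)\|u\|_2^2.
\]

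Putting the two inequalities together yields the norm equivalence $\sqrt{h_0+1}\,\|u\|_2\leq \|u\|_{\mathcal{H}_s}\leq \sqrt{C+M}\,\|u\|_2$. To conclude that the spaces coincide as sets (not merely that the norms are comparable on their common domain), I would appeal to the density of $C_c(V)$ in both $\ell^2(V)$ and $W^{s,2}(V)$, exactly as in the closing step of Lemma \ref{lb}, so that completions agree. There is no real obstacle here; the only thing to be careful about is not to use periodicity of $h$, which is irrelevant for this lemma—only the uniform boundedness from $(h_2)$ and the uniform lower bound from $(h_1)$ enter.
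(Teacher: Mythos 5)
Your proof is correct and follows essentially the same route as the paper: both use the two-sided bound $(h_0+1)\leq h(x)+1\leq M$ from $(h_1)$--$(h_2)$ together with the gradient estimate $\|\nabla^s u\|_2^2\leq C\|u\|_2^2$ supplied by Lemma \ref{lb} (the paper merely factors the argument through the intermediate equivalence $\mathcal{H}_s\sim W^{s,2}$ before invoking Lemma \ref{lb}). Your closing remark that only boundedness, not periodicity, of $h$ is used is accurate and consistent with the paper's argument.
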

 
 \begin{proof}
 We only need to prove the equivalence between $W^{s,2}(V)$ and $\mathcal{H}_s(V)$. In fact, by $(h_1)$ and $(h_2)$, we have
 $$(h_0+1)\leq (h(x)+1)\leq C+1,$$
 where $C$ is a positive constant. Then we get that
 $$\int_Vu^2\,d\mu \leq \frac{1}{(h_0+1)}\int_V (h(x)+1)u^2\,d\mu\leq (C+1)\int_V u^2\,d\mu,$$
 and hence
 $$\|u\|^2_{s,2}\leq \frac{1}{(h_0+1)}\|u\|^2_{\mathcal{H}_s}\leq (C+1)\|u\|^2_{s,2}.$$
This means that $W^{s,2}(V)$ and $\mathcal{H}_s(V)$ are equivalent. By Lemma \ref{lb}, we get that  $\mathcal{H}_s(V)$ and $\ell^2(V)$ are equivalent.
 \end{proof}

\
\

Since the definition of fractional Laplacian in \cite{ZLY} is a generalization of that in \cite{W0}, the formulas of integration by parts in \cite{ZLY} also hold on lattice graphs $V$. 

\begin{lm}\label{l0} 
Let $u \in W^{s, 2}(V)$. Then for any $\phi\in C_{c}(V)$, we have
$$
\int_{V}\nabla^s u \nabla^s\phi \,d \mu=\int_{V}(-\Delta)^s u \phi\,d\mu.
$$
\end{lm}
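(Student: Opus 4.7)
The plan is a direct calculation: expand $\nabla^s u \nabla^s \phi$ by its definition, rearrange using the symmetry $w_s(x,y)=w_s(y,x)$, and collapse the result into $\phi \cdot (-\Delta)^s u$. Concretely, writing
\[
\int_{V}\nabla^s u \nabla^s\phi\,d\mu = \frac{1}{2}\sum_{x\in V}\sum_{y\neq x} w_s(x,y)\bigl(u(x)-u(y)\bigr)\bigl(\phi(x)-\phi(y)\bigr),
\]
I would set $G(x,y):=w_s(x,y)(u(x)-u(y))\phi(x)$ and observe that $G(x,y)+G(y,x)$ equals the symmetric kernel appearing above (since swapping $x\leftrightarrow y$ flips both differences). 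Therefore the double sum equals $\sum_{x\neq y} G(x,y)$, which by Fubini is $\sum_x \phi(x)\sum_{y\neq x} w_s(x,y)(u(x)-u(y)) = \int_V (-\Delta)^s u\,\phi\,d\mu$.

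The step I regard as the main obstacle is justifying the manipulations of the double sum, i.e.\ proving absolute convergence so that Fubini applies and the symmetrization trick is legal. For this I would use that $\phi$ has finite support $K$; by Cauchy--Schwarz, for each $x\in K$,
\[
\sum_{y\neq x} w_s(x,y)|u(x)-u(y)| \leq \Bigl(\sum_{y\neq x} w_s(x,y)\Bigr)^{1/2}\Bigl(\sum_{y\neq x} w_s(x,y)(u(x)-u(y))^2\Bigr)^{1/2}.
\]
The first factor is uniformly bounded in $x$ by the upper estimate in \eqref{28} together with $\sum_{z\neq 0}|z|^{-d-2s}<\infty$ (already used in Lemma~\ref{lb}), while the second factor is $\sqrt{2}\,|\nabla^s u(x)|\leq \sqrt{2}\,\|u\|_{s,2}$ since $|\nabla^s u|^2$ is summable. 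Summing over the finite set $K$ then gives
\[
\sum_{x\in V}\sum_{y\neq x} w_s(x,y)|u(x)-u(y)|\,|\phi(x)|<+\infty,
\]
which is exactly the absolute convergence needed.

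Once absolute convergence is in hand, the symmetrization step is a one-line reindexing: swapping the dummy variables $x$ and $y$ in $\sum_{x\neq y}G(y,x)$ and using $w_s(x,y)=w_s(y,x)$ shows
\[
\sum_{x\neq y}\bigl(G(x,y)+G(y,x)\bigr)=2\sum_{x\neq y}G(x,y),
\]
so the $\tfrac12$ in the definition of the gradient form is absorbed, and the identity follows. (Alternatively, as already noted just before the statement, this is a direct specialization to lattice graphs of the integration-by-parts formula proved in \cite{ZLY}, so one may simply cite that source; the argument above spells out why the hypotheses there are met here.)
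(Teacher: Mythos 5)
Your proof is correct. Note, however, that the paper does not actually prove Lemma~\ref{l0} at all: it only remarks, just before the statement, that the integration-by-parts formula of \cite{ZLY} carries over to the lattice setting, and leaves it at that. So your argument is not a variant of the paper's proof but a self-contained replacement for a citation. The substance you add is exactly the right one: the symmetrization $G(x,y)+G(y,x)=w_s(x,y)(u(x)-u(y))(\phi(x)-\phi(y))$ and the relabeling of dummy indices are only legitimate once the double sum is absolutely convergent, and your Cauchy--Schwarz estimate
\[
\sum_{y\neq x} w_s(x,y)|u(x)-u(y)| \leq \Bigl(\sum_{y\neq x} w_s(x,y)\Bigr)^{1/2}\sqrt{2}\,\bigl|\nabla^s u(x)\bigr|
\]
combined with the finiteness of $\sum_{z\neq 0}|z|^{-d-2s}$ (from (\ref{28})) and the finite support of $\phi$ delivers precisely that; the same bound also shows that $(-\Delta)^s u(x)$ is given by an absolutely convergent series at each $x$, so the right-hand side of the identity is well defined. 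The only cosmetic point worth tightening is to say explicitly that $\sum_{x\neq y}|G(y,x)|$ is the same (finite) quantity as $\sum_{x\neq y}|G(x,y)|$ after exchanging the names of the variables, so the full symmetric sum is dominated by $|G(x,y)|+|G(y,x)|$ and Fubini applies to it as well. With that, your argument is complete and arguably preferable to the paper's bare appeal to \cite{ZLY}, since it verifies on the lattice exactly the hypotheses that make the cited formula applicable.
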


\begin{lm}\label{l2.1}
  If $u \in D(J)$ is a weak solution to the equation (\ref{0.2}), then $u$ is a pointwise solution to the equation (\ref{0.2}).  
\end{lm}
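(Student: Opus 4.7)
The plan is to test the weak formulation against a single point-mass $\phi = \delta_{x_0}$ and read off the pointwise identity. The function $\delta_{x_0}$, which equals $1$ at $x_0$ and $0$ elsewhere, lies in $C_c(\mathbb{Z}^d)$. Since $C_c(\mathbb{Z}^d) \subset \ell^2(\mathbb{Z}^d) = W^{s,2}(\mathbb{Z}^d)$ by Lemma \ref{lb}, and since $h$ is locally bounded while $\delta_{x_0}^2\log\delta_{x_0}^2$ is identically zero (using $0\log 0 = 0$), we have $\delta_{x_0} \in D(J)$. Hence $\delta_{x_0}$ is an admissible test function in the definition of weak solution.

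Next, I would observe that $u \in D(J) \subset \mathcal{H}_s \subset W^{s,2}(\mathbb{Z}^d) \subset \ell^2(\mathbb{Z}^d) \subset \ell^{\infty}(\mathbb{Z}^d)$, so $(-\Delta)^s u(x_0)$ is defined pointwise. Applying the integration-by-parts formula of Lemma \ref{l0} with the test function $\phi = \delta_{x_0}$ gives
$$
\int_{\mathbb{Z}^d}\nabla^s u\,\nabla^s \delta_{x_0}\,d\mu = \int_{\mathbb{Z}^d}(-\Delta)^s u\,\delta_{x_0}\,d\mu = (-\Delta)^s u(x_0).
$$
The remaining terms in $\langle J'(u),\delta_{x_0}\rangle$ are point evaluations:
$$
\int_{\mathbb{Z}^d}h(x)\,u\,\delta_{x_0}\,d\mu = h(x_0)u(x_0),\qquad \int_{\mathbb{Z}^d}u\,\delta_{x_0}\,\log u^2\,d\mu = u(x_0)\log u(x_0)^2,
$$
with the convention $0\log 0 = 0$ used if $u(x_0) = 0$ (in that case both sides vanish).

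Combining these in $\langle J'(u),\delta_{x_0}\rangle = 0$ yields
$$
(-\Delta)^s u(x_0) + h(x_0)u(x_0) - u(x_0)\log u(x_0)^2 = 0.
$$
Since $x_0\in\mathbb{Z}^d$ was arbitrary, $u$ solves (\ref{0.2}) pointwise. There is no serious obstacle here: the only point requiring a moment's care is that $\delta_{x_0}\in D(J)$, which is immediate from finite support, and that the integration-by-parts identity (Lemma \ref{l0}) applies because $u\in W^{s,2}(\mathbb{Z}^d)$ and $\delta_{x_0}\in C_c(\mathbb{Z}^d)$.
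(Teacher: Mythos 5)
Your proof is correct and follows essentially the same route as the paper: test the weak formulation against the indicator function of a single vertex $x_0$ (which lies in $C_c(\mathbb{Z}^d)\subset D(J)$), apply the integration-by-parts identity of Lemma \ref{l0} to recover $(-\Delta)^s u(x_0)$, and read off the pointwise equation. Your explicit checks that $\delta_{x_0}\in D(J)$ and that the logarithmic term is well defined when $u(x_0)=0$ are slightly more careful than the paper's write-up but do not change the argument.
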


\begin{proof}
Since $u \in D(J)$ is a weak solution of the equation (\ref{0.2}), for any $\varphi \in D(J)$, 
$$
\int_{V}(\nabla^su\nabla^s \phi+h(x) u \phi)\,d \mu=\int_{V} u\phi\log u^2\,d\mu.
$$
Note that $C_c(V)$ is dense in $D(J)$. By Lemma \ref{l0}, we have
\begin{equation}\label{2.1}
\int_{V}(-\Delta)^s u\,\phi\,d\mu+\int_{V}h(x) u \varphi\,d \mu=\int_{V}u\varphi \log u^{2}\,d\mu,\quad \phi\in C_c(V).
\end{equation}
For any fixed $x_0\in V$, let $\varphi_0(x_0)=1$ and $\varphi_0(x)=0$ for the rest.
Clearly, $\varphi_0 \in C_c(V)$. Substituting $\phi_0$ into (\ref{2.1}), we get that
$$(-\Delta)^s u(x_0)+ h(x_0)        u(x_0)=u(x_0)\log u^{2}(x_0).$$ Thus $u$ is a pointwise solution of the equation (\ref{0.2}) since $x_0$ is arbitrary.

\end{proof}

We give two compactness results for $\mathcal{H}_s$. The first one is a discrete Lions lemma, which plays an important role in the proof of Theorem \ref{t1}.
\begin{lm}\label{li} Let $(h_1)$ hold. For $2\leq q<+\infty$, if $\left\{u_{k}\right\}$ is bounded in $\mathcal{H}_s$ and
$$
\left\|u_{k}\right\|_{\infty} \rightarrow 0,\quad k \rightarrow+\infty,
$$
then, for any $q\in(2,+\infty)$,
$$
u_{k} \rightarrow 0,\quad  \text { in } \ell^{q}(V).
$$   
\end{lm}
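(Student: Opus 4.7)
The plan is to exploit the well-known fact that on discrete (counting measure) spaces, $\ell^p \hookrightarrow \ell^q$ continuously whenever $p \leq q$, which is the reverse of the continuous setting. In particular, once boundedness in $\ell^2$ is available, the $\ell^q$ norm for $q>2$ can be dominated by the $\ell^\infty$ norm to a positive power times the $\ell^2$ norm.

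First, I would observe that boundedness of $\{u_k\}$ in $\mathcal{H}_s$ gives boundedness in $\ell^2(V)$: by hypothesis $(h_1)$ and the chain of embeddings in (\ref{ac}) (or alternatively Lemma \ref{lp}), there exists a constant $M>0$ with $\|u_k\|_2 \leq M$ for all $k$. Next, for any fixed $q \in (2,+\infty)$ I would use the pointwise interpolation
\[
|u_k(x)|^q = |u_k(x)|^{q-2}\,|u_k(x)|^2 \leq \|u_k\|_\infty^{q-2}\,|u_k(x)|^2,
\]
and sum over $x \in V$ to get
\[
\|u_k\|_q^q \leq \|u_k\|_\infty^{q-2}\,\|u_k\|_2^2 \leq M^2\,\|u_k\|_\infty^{q-2}.
\]
Since $\|u_k\|_\infty \to 0$ as $k \to +\infty$ and $q-2>0$, the right-hand side tends to $0$, so $u_k \to 0$ in $\ell^q(V)$.

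There is essentially no main obstacle here: the classical Lions concentration-compactness lemma in $\R^d$ requires delicate translation arguments precisely because $\ell^2_{\mathrm{loc}}$-type bounds do not force $L^q$-decay in the continuous case, but on $\mathbb{Z}^d$ with the counting measure the reverse inclusion of $\ell^p$ spaces makes the statement a one-line interpolation. Thus the only content is to justify the $\ell^2$-boundedness from the $\mathcal{H}_s$-boundedness, which is immediate from $(h_1)$.
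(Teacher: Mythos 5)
Your proposal is correct and coincides with the paper's own proof: both deduce $\ell^2$-boundedness from the $\mathcal{H}_s$-bound via $(h_1)$ and then apply the interpolation inequality $\|u_k\|_q^q \leq \|u_k\|_2^2\,\|u_k\|_\infty^{q-2}$. Nothing is missing.
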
 

\begin{proof}
 Since $\left\{u_{k}\right\}$ is bounded in $\mathcal{H}_s$, it follows from (\ref{ac}) that $\{u_k\}$ is bounded in $\ell^{2}(V)$. Hence, for $2<q<+\infty$, by an interpolation inequality, we obtain that
$$
\left\|u_{k}\right\|_{q}^{q} \leq\left\|u_{k}\right\|_{2}^{2}\left\|u_{k}\right\|_{\infty}^{q-2}\rightarrow 0,\quad k\rightarrow+\infty.
$$
\end{proof} 

The second one is a compact embedding result, we refer the readers to \cite[Lemma 5.1]{ZLY}.
\begin{lm}\label{l2}
 Let $(h_1)$ and $(h_3)$ hold. Then the space $\mathcal{H}_{s}$ is continuously embedded into $\ell^{q}(V)$ with $q\in[2,+\infty].$ Moreover, for any bounded sequence $\{u_k\}\subset \mathcal{H}_{s}$, there exists $u\in\mathcal{H}_{s}$ such that, up to a subsequence,
 $$
\begin{cases}u_k \rightharpoonup u, & \text { weakly in } \mathcal{H}_{s}, \\ u_k \rightarrow u, & \text { pointwise in } V, \\ u_k \rightarrow u, & \text { strongly in } \ell^{q}(V),~q \in[2,+\infty].\end{cases}
$$
\end{lm}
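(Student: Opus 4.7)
The plan is to establish the continuous embedding first and then the compactness, each of which rests on one specific hypothesis. For the continuous embedding, hypothesis $(h_1)$ alone suffices: the inequality already recorded in (\ref{ac}) gives $\|u\|_2 \leq (h_0+1)^{-1/2}\|u\|_{\mathcal{H}_s}$. On a locally finite graph with the counting measure, each single vertex contributes at most $\|u\|_2$ to any value, since $|u(x)|^2 \leq \sum_{y \in V} |u(y)|^2 = \|u\|_2^2$, which yields $\|u\|_\infty \leq \|u\|_2$ and hence $\mathcal{H}_s \hookrightarrow \ell^\infty(V)$. The intermediate exponents $q \in (2,\infty)$ are then handled by the elementary interpolation $\|u\|_q^q \leq \|u\|_\infty^{q-2}\|u\|_2^2$.

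For the compactness assertion, let $\{u_k\}$ be bounded in $\mathcal{H}_s$. Since $\mathcal{H}_s$ is a Hilbert space, a subsequence (not relabelled) satisfies $u_k \rightharpoonup u$ weakly in $\mathcal{H}_s$ for some $u \in \mathcal{H}_s$. Pointwise convergence is immediate: for each fixed $y \in V$, the evaluation $u \mapsto u(y)$ is a bounded linear functional on $\mathcal{H}_s$, because $|u(y)| \leq \|u\|_\infty \leq (h_0+1)^{-1/2}\|u\|_{\mathcal{H}_s}$ by the previous step, so weak convergence forces $u_k(y) \to u(y)$.

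The essential step, and the only place where $(h_3)$ enters, is strong convergence in $\ell^2(V)$. Given $\varepsilon > 0$, coercivity of $h$ provides $R > 0$ with $h(x)+1 \geq 1/\varepsilon$ whenever $|x-x_0| \geq R$. I would split the $\ell^2$ norm into a near and a far part,
\begin{equation*}
\|u_k-u\|_2^2 = \sum_{|x-x_0|<R}|u_k(x)-u(x)|^2 + \sum_{|x-x_0|\geq R}|u_k(x)-u(x)|^2,
\end{equation*}
and bound the far part by
\begin{equation*}
\sum_{|x-x_0|\geq R}|u_k(x)-u(x)|^2 \leq \varepsilon \sum_{|x-x_0|\geq R}(h(x)+1)|u_k(x)-u(x)|^2 \leq \varepsilon\, \|u_k-u\|_{\mathcal{H}_s}^2,
\end{equation*}
which is uniformly bounded in $k$ because weakly convergent sequences are bounded. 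The near part is a finite sum, so the established pointwise convergence sends it to $0$ as $k\to\infty$. Letting $k \to \infty$ and then $\varepsilon \to 0$ gives $u_k \to u$ in $\ell^2$. This tail estimate is the main obstacle: without $(h_3)$ mass could escape to infinity and destroy strong convergence, exactly the phenomenon that forces the periodic setting of Theorem \ref{t1} to rely on the Lions-type Lemma \ref{li} instead.

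Strong convergence in $\ell^q$ for $q \in (2,\infty]$ then follows with no extra work. The bound $\|u_k-u\|_\infty \leq \|u_k-u\|_2 \to 0$ handles $q = \infty$, and the interpolation $\|u_k-u\|_q^q \leq \|u_k-u\|_\infty^{q-2}\|u_k-u\|_2^2$ closes the case $q \in (2,\infty)$.
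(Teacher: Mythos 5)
Your proof is correct and complete; the paper itself offers no argument for this lemma, merely citing \cite[Lemma 5.1]{ZLY}, and your proof is the standard one that such references use: the continuous embedding $\mathcal{H}_s\hookrightarrow\ell^2\hookrightarrow\ell^q\hookrightarrow\ell^\infty$ from $(h_1)$ and the trivial bound $\|u\|_\infty\leq\|u\|_2$ on counting measure, followed by the tail estimate from the coercivity $(h_3)$ combined with pointwise convergence on the finite ball. The only point worth making explicit is that the far-part bound uses $h(x)+1>0$ everywhere (guaranteed by $(h_1)$) so that the partial sum over $|x-x_0|\geq R$ is dominated by the full $\mathcal{H}_s$ norm, which you implicitly use and which holds.
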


Finally, we give some results about sign-changing solutions related to the equation (\ref{0.2}).
\begin{lm}\label{10}
 Let $u\in \mathcal{H}_s.$ Then $u^+, u^-\in \mathcal{H}_s.$
\end{lm}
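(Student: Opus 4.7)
The plan is to verify the two defining conditions of $\mathcal{H}_s$ for $u^\pm$, namely finiteness of the fractional Dirichlet energy and of the weighted $\ell^2$ term, and also that $u^\pm \in \ell^\infty(\mathbb{Z}^d)$. Boundedness is immediate from $|u^\pm| \leq |u|$ together with $\mathcal{H}_s \subset W^{s,2}(\mathbb{Z}^d) \subset \ell^\infty(\mathbb{Z}^d)$ (the latter holds on $\mathbb{Z}^d$ since, via Lemma \ref{lb}, $W^{s,2}$ is equivalent to $\ell^2$, which sits inside $\ell^\infty$).

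The key pointwise tool is the elementary inequality
\[
|a^+ - b^+| \leq |a-b|, \qquad |a^- - b^-| \leq |a-b|, \qquad a,b\in\mathbb{R},
\]
proved by a short case analysis on the signs of $a,b$. Applying this with $a=u(x)$, $b=u(y)$, multiplying by the symmetric positive weight $w_s(x,y)/2$, and summing over $y\neq x$ yields
\[
|\nabla^s u^\pm(x)|^2 \leq |\nabla^s u(x)|^2 \qquad \text{for every } x\in\mathbb{Z}^d.
\]
Summing in $x$ then gives $\int_{\mathbb{Z}^d}|\nabla^s u^\pm|^2\,d\mu \leq \int_{\mathbb{Z}^d}|\nabla^s u|^2\,d\mu < +\infty$.

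For the potential term, I will use $(u^\pm(x))^2 \leq u(x)^2$ pointwise, together with $(h_1)$ which guarantees $h(x)+1 \geq h_0+1 > 0$, so the integrand is nonnegative and monotone control is legitimate:
\[
\int_{\mathbb{Z}^d}(h(x)+1)(u^\pm)^2\,d\mu \leq \int_{\mathbb{Z}^d}(h(x)+1)u^2\,d\mu < +\infty.
\]
Adding the two estimates yields $\|u^\pm\|_{\mathcal{H}_s}^2 \leq \|u\|_{\mathcal{H}_s}^2 < +\infty$, hence $u^\pm \in \mathcal{H}_s$.

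There is essentially no obstacle here; the only nontrivial ingredient is the pointwise contraction inequality for the positive/negative parts, which is standard. The positivity of the weight $h(x)+1$ (ensured by assumption $(h_1)$) is what allows the crude pointwise bound $(u^\pm)^2\leq u^2$ to be integrated directly against $h(x)+1$ without sign issues.
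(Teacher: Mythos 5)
Your proof is correct and complete. The only thing to note is that the paper does not actually prove this lemma itself: its ``proof'' is a single citation to \cite[Lemma 5.2]{ZLY}, so there is no argument in the paper to compare against line by line. Your self-contained argument is the standard one and is exactly what such a lemma amounts to: the maps $t\mapsto\max\{t,0\}$ and $t\mapsto\min\{t,0\}$ are $1$-Lipschitz, so $(u^{\pm}(x)-u^{\pm}(y))^2\leq(u(x)-u(y))^2$ for every pair, and summing against the positive weights $w_s(x,y)$ gives $|\nabla^s u^{\pm}(x)|\leq|\nabla^s u(x)|$ pointwise; the potential term is handled by $(u^{\pm})^2\leq u^2$ together with $h(x)+1\geq h_0+1>0$ from $(h_1)$, and $u^{\pm}\in\ell^\infty$ is immediate from $|u^{\pm}|\leq|u|$. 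What your version buys is independence from the external reference (which in \cite{ZLY} is stated for general locally finite weighted graphs); nothing in your argument uses the lattice structure either, so it is just as general. One cosmetic remark: you do not need the chain $W^{s,2}\subset\ell^2\subset\ell^\infty$ to get boundedness of $u^{\pm}$, since $W^{s,2}(\mathbb{Z}^d)$ is by definition a subset of $\ell^\infty(\mathbb{Z}^d)$ and $|u^{\pm}|\leq|u|$ already settles it.
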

\begin{proof}
    We refer the readers to \cite[Lemma 5.2]{ZLY}.
\end{proof}

\begin{prop}\label{o} Let $\alpha,\beta>0$. Then for any $u\in \mathcal{H}_s$, 
  \begin{itemize}
      \item[(i)] $$
      \int_{V} \left|\nabla^s(\alpha u^{+}+\beta u^{-})\right|^2 d \mu=\int_{V} \left|\nabla^s (\alpha u^{+})\right|^2 d \mu+\int_{V}\left|\nabla^s (\beta u^{-})\right|^2 d \mu-\alpha\beta K(u),
      $$
 where $K(u)=\sum\limits_{x \in V} \sum\limits_{y \neq x}w_s(x,y)\left[u^{+}(x) u^{-}(y)+u^{-}(x) u^{+}(y)\right]\leq 0;$    
 \item[(ii)] $$
 \int_{V} \nabla^s\left(\alpha u^{+}+\beta u^{-}\right)\nabla^s (\alpha u^{+})\, d \mu=\int_{V} \left|\nabla^s(\alpha u^{+})\right|^2 d \mu-\frac{\alpha\beta}{2} K(u);
 $$
 \item[(iii)] $$
 \int_{V} \nabla^s\left(\alpha u^{+}+\beta u^{-}\right)\nabla^s (\beta u^{-})\, d \mu=\int_{V} \left|\nabla^s (\beta u^{-})\right|^2 d \mu-\frac{\alpha\beta}{2} K(u).
$$
  \end{itemize}  
\end{prop}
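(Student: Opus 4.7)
The plan is to derive all three identities from a single pointwise algebraic observation: because $u^+(x)u^-(x)=0$ for every $x\in V$, one has
\[
(u^+(x)-u^+(y))(u^-(x)-u^-(y)) = -u^+(x)u^-(y)-u^-(x)u^+(y)
\]
for all $x\neq y$. This quantity is pointwise $\leq 0$, since $u^+(x)\geq 0$ and $u^-(y)\leq 0$. Therefore, summing against $w_s(x,y)\geq 0$ and interpreting the sum in the definition of $K(u)$, one immediately reads off $K(u)\leq 0$.

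For part (i), I would exploit the bilinearity of the pointwise gradient form. Writing $v=\alpha u^++\beta u^-$, expand
\[
(v(x)-v(y))^2 = \alpha^{2}(u^+(x)-u^+(y))^{2}+\beta^{2}(u^-(x)-u^-(y))^{2}+2\alpha\beta(u^+(x)-u^+(y))(u^-(x)-u^-(y)),
\]
multiply by $\tfrac{1}{2}w_s(x,y)$, and sum over all $y\neq x$ and then over $x\in V$. The first two terms give $\int_V|\nabla^s(\alpha u^+)|^2\,d\mu$ and $\int_V|\nabla^s(\beta u^-)|^2\,d\mu$ directly, while the cross term, together with the identity above, collapses to $-\alpha\beta K(u)$.

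Parts (ii) and (iii) follow by the same token. For (ii), expand
\[
\big((\alpha u^+(x)+\beta u^-(x))-(\alpha u^+(y)+\beta u^-(y))\big)\big(\alpha u^+(x)-\alpha u^+(y)\big)
\]
as $\alpha^{2}(u^+(x)-u^+(y))^{2}+\alpha\beta(u^-(x)-u^-(y))(u^+(x)-u^+(y))$, multiply by $\tfrac{1}{2}w_s(x,y)$ and sum. The first piece produces $\int_V|\nabla^s(\alpha u^+)|^2\,d\mu$, and the second yields $\tfrac{1}{2}\alpha\beta$ times the sum that defines $-K(u)$, i.e.\ $-\tfrac{\alpha\beta}{2}K(u)$. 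Part (iii) is identical after interchanging the roles of $u^+$ and $u^-$.

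The only non-routine point is justifying the rearrangement of the double sums and the pointwise splitting of gradients, which must be handled because the inner sums are a priori only conditionally obvious. Here I would use Lemma \ref{10} to conclude $u^{\pm}\in\mathcal{H}_s$, so that $\int_V|\nabla^s u^{\pm}|^2\,d\mu<\infty$. Together with the Cauchy--Schwarz bound $|(u^+(x)-u^+(y))(u^-(x)-u^-(y))|\leq \tfrac{1}{2}((u^+(x)-u^+(y))^{2}+(u^-(x)-u^-(y))^{2})$, this ensures absolute convergence of every double sum in sight, legitimizing both the expansion of $(v(x)-v(y))^{2}$ term-by-term and the application of Fubini to rearrange the $x,y$ sums.
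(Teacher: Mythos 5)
Your proof is correct and follows essentially the same route as the paper: expand the bilinear gradient form, use $u^{+}(x)u^{-}(x)=0$ to reduce the cross term to $-u^{+}(x)u^{-}(y)-u^{-}(x)u^{+}(y)$, and identify the resulting double sum with $K(u)$; your extra care about absolute convergence via Lemma \ref{10} is a welcome addition that the paper leaves implicit. One small wording slip: the displayed quantity $-u^{+}(x)u^{-}(y)-u^{-}(x)u^{+}(y)$ is pointwise $\geq 0$ (it is the summand $u^{+}(x)u^{-}(y)+u^{-}(x)u^{+}(y)$ of $K(u)$ that is $\leq 0$), but this does not affect the correct conclusion $K(u)\leq 0$.
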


\begin{proof}
(i) A direct calculation yields that
$$
\begin{aligned}
& \int_{V} \left|\nabla^s(\alpha u^{+}+\beta u^{-})\right|^2 d \mu \\
= & \frac{1}{2} \sum_{x \in V} \sum_{y \neq x}w_s(x,y)\left[\left(\alpha u^{+}+\beta u^{-}\right)(x)-\left(\alpha u^{+}+\beta u^{-}\right)(y)\right]^{2}\\
= & \frac{1}{2} \sum_{x \in V} \sum_{y \neq x}w_{s}(x,y)\left[\left(\alpha u^{+}(x)-\alpha u^{+}(y)\right)^{2}+\left(\beta u^{-}(x)-\beta u^{-}(y)\right)^{2}-2\alpha\beta\left[u^{+}(x) u^{-}(y)+u^{-}(x) u^{+}(y)\right]\right] \\
= & \int_{V} \left|\nabla^s (\alpha u^{+})\right|^2 d \mu+\int_{V}\left|\nabla^s (\beta u^{-})\right|^2 d \mu-\alpha\beta K(u).
\end{aligned}
$$

(ii) By a direct computation, we get that
$$
\begin{aligned}
&\int_{V} \nabla^s\left(\alpha u^{+}+\beta u^{-}\right)\nabla^s (\alpha u^{+})\, d \mu \\= & \frac{1}{2} \sum_{x \in V} \sum_{y \neq x}w_s(x,y)\left[\left(\alpha u^{+}+\beta u^{-}\right)(x)-\left(\alpha u^{+}+\beta u^{-}\right)(y)\right]\left[\alpha u^{+}(x)-\alpha u^{+}(y)\right] \\
= & \frac{1}{2} \sum_{x \in V} \sum_{y \neq x}w_s(x,y)\left[\left(\alpha u^{+}(x)-\alpha u^{+}(y)\right)^{2}-\alpha\beta\left(u^{+}(x) u^{-}(y)+u^{-}(x) u^{+}(y)\right)\right] \\
= & \int_{V} \left|\nabla^s(\alpha u^{+})\right|^2 d \mu-\frac{\alpha\beta}{2} K(u) .
\end{aligned}
$$

(iii) The proof is similar to that of (ii), we omit here.

\end{proof}

\begin{crl}\label{co}  Let $\alpha,\beta>0$. Then for any $u \in D(J)$,
\begin{itemize}
    \item[(i)] $$
\begin{aligned}
J(\alpha u^{+}+\beta u^{-})=&J\left(\alpha u^{+}\right)+J\left(\beta u^{-}\right)-\frac{\alpha\beta}{2}K(u);
\end{aligned}
$$

\item[(ii)] $$
\begin{aligned}
\langle J^{\prime}(\alpha u^{+}+\beta u^{-}),\alpha u^{+}\rangle =&\langle J^{\prime}\left(\alpha u^{+}\right),\alpha u^{+}\rangle-\frac{\alpha\beta}{2}K(u);
\end{aligned}
$$

\item[(iii)] $$
\begin{aligned}
\langle J^{\prime}(\alpha u^{+}+\beta u^{-}),\beta u^{-}\rangle &=\langle J^{\prime}\left(\beta u^{-}\right),\beta u^{-}\rangle-\frac{\alpha\beta}{2}K(u).
\end{aligned}
$$
\end{itemize}
\end{crl}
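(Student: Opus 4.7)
The plan is to expand each side of the three identities directly from the definitions of $J$ and $J'$, invoke Proposition \ref{o} to absorb the fractional gradient contributions (which is where all the cross-term $K(u)$ comes from), and then observe that every remaining (local, pointwise) term splits cleanly because $u^{+}(x)u^{-}(x)=0$ for every $x\in\mathbb{Z}^d$. I will prove (i) in detail; (ii) and (iii) follow by the same mechanism and a parallel use of parts (ii) and (iii) of Proposition \ref{o}.

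For (i), I would first write
\[
J(\alpha u^{+}+\beta u^{-})=\tfrac12\!\int_V|\nabla^s(\alpha u^{+}+\beta u^{-})|^2\,d\mu+\tfrac12\!\int_V(h(x)+1)(\alpha u^{+}+\beta u^{-})^2\,d\mu-\tfrac12\!\int_V(\alpha u^{+}+\beta u^{-})^2\log(\alpha u^{+}+\beta u^{-})^2\,d\mu.
\]
The first integral is handled by Proposition \ref{o}(i), producing $\int|\nabla^s(\alpha u^{+})|^2+\int|\nabla^s(\beta u^{-})|^2-\alpha\beta K(u)$. For the remaining two integrals, I would use the pointwise disjoint-support identity: at each $x\in\mathbb{Z}^d$ either $u(x)\geq0$ (so $u^-(x)=0$ and $(\alpha u^{+}+\beta u^{-})(x)=\alpha u^{+}(x)$) or $u(x)\leq 0$ (so $u^+(x)=0$ and $(\alpha u^{+}+\beta u^{-})(x)=\beta u^{-}(x)$). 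Hence pointwise
\[
(\alpha u^{+}+\beta u^{-})^2=(\alpha u^{+})^2+(\beta u^{-})^2,\qquad (\alpha u^{+}+\beta u^{-})^2\log(\alpha u^{+}+\beta u^{-})^2=(\alpha u^{+})^2\log(\alpha u^{+})^2+(\beta u^{-})^2\log(\beta u^{-})^2,
\]
with the standard convention $0\log 0=0$. Summing over $V$ and regrouping gives exactly $J(\alpha u^{+})+J(\beta u^{-})-\frac{\alpha\beta}{2}K(u)$.

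For (ii) I would expand
\[
\langle J'(\alpha u^{+}+\beta u^{-}),\alpha u^{+}\rangle=\int_V\nabla^s(\alpha u^{+}+\beta u^{-})\nabla^s(\alpha u^{+})\,d\mu+\int_V h(x)(\alpha u^{+}+\beta u^{-})(\alpha u^{+})\,d\mu-\int_V(\alpha u^{+}+\beta u^{-})(\alpha u^{+})\log(\alpha u^{+}+\beta u^{-})^2\,d\mu,
\]
apply Proposition \ref{o}(ii) to the gradient term, and use $u^{+}u^{-}\equiv 0$ to reduce the remaining products pointwise to $(\alpha u^{+})^2$ and $(\alpha u^{+})^2\log(\alpha u^{+})^2$ respectively (the $\beta u^{-}$ contribution vanishes on the support of $u^{+}$). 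This yields $\langle J'(\alpha u^{+}),\alpha u^{+}\rangle-\frac{\alpha\beta}{2}K(u)$. Part (iii) is obtained verbatim with the roles of $u^{+}$ and $u^{-}$ interchanged, invoking Proposition \ref{o}(iii).

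There is no genuine obstacle here; the argument is essentially a bookkeeping consequence of Proposition \ref{o} together with the pointwise disjointness $u^{+}u^{-}=0$. The only mild care needed is with the logarithmic term, where one must justify $0\log 0=0$ at zeros of $u$ and observe that the integrand $(\alpha u^{+}+\beta u^{-})^2\log(\alpha u^{+}+\beta u^{-})^2$ is absolutely summable because $u\in D(J)$ and $u^{\pm}\in\mathcal{H}_s$ by Lemma \ref{10}, so $\alpha u^{\pm}\in D(J)$ as well.
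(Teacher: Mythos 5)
Your proposal is correct and follows essentially the same route as the paper: Proposition \ref{o} handles the fractional gradient cross-terms (producing the $-\frac{\alpha\beta}{2}K(u)$ contributions), and the disjointness $u^{+}u^{-}\equiv 0$ splits the potential and logarithmic terms pointwise, exactly as in the paper's splitting of the integral over $\{u\geq 0\}$ and $\{u<0\}$. Your added remarks on the convention $0\log 0=0$ and on absolute summability via $u\in D(J)$ are harmless refinements of the same argument.
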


\begin{proof}
For $\alpha,\beta >0$, we have 
$$
\begin{aligned}
&\int_V(\alpha u^{+}+\beta u^{-})^2\log(\alpha u^{+}+\beta u^{-})^2\,d\mu\\=&\int_{\{u\geq 0\}}(\alpha u^{+}+\beta u^{-})^2\log(\alpha u^{+}+\beta u^{-})^2\,d\mu+\int_{\{u<0\}}(\alpha u^{+}+\beta u^{-})^2\log(\alpha u^{+}+\beta u^{-})^2\,d\mu\\=&\int_{\{u\geq 0\}}(\alpha u^{+})^2\log(\alpha u^{+})^2\,d\mu+\int_{\{u<0\}}(\beta u^{-})^2\log(\beta u^{-})^2\,d\mu\\=&\int_{V}
\left[(\alpha u^{+})^2\log(\alpha u^{+})^2+(\beta u^{-})^2\log(\beta u^{-})^2\right]\,d\mu.
\end{aligned}
$$
(i) By Proposition \ref{o}, we obtain that
$$
\begin{aligned}
&J(\alpha u^++\beta u^-)\\=&\frac{1}{2}
\int_{V} |\nabla^s (\alpha u^++\beta u^-)|^2\,d\mu+\frac{1}{2}\int_{V}(h(x)+1)(\alpha u^++\beta u^-)^2\,d \mu\\&-\frac{1}{2}\int_{V}(\alpha u^++\beta u^-)^2\log (\alpha u^++\beta u^-)^2 \,d\mu\\=&\frac{1}{2}\left(\int_{V} \left|\nabla^s(\alpha u^+)\right|^2 d \mu+\int_{V} \left|\nabla^s(\beta u^-)\right|^2 d \mu-\alpha\beta K(u)\right)+\frac{1}{2}\int_{V}(h(x)+1)\left[(\alpha u^+)^2+(\beta u^-)^2\right]\,d \mu\\&-\frac{1}{2}\int_{V}
\left[(\alpha u^{+})^2\log(\alpha u^{+})^2+(\beta u^{-})^2\log(\beta u^{-})^2\right]\,d\mu\\=& \frac{1}{2}\int_{V}  |\nabla^s(\alpha u^+)|^2 +(h(x)+1)(\alpha u^+)^2\,d \mu-\frac{1}{2}\int_{V}
(\alpha u^{+})^2\log(\alpha u^{+})^2\,d\mu\\&+\frac{1}{2}\int_{V}|\nabla^s(\beta u^-)|^2 +(h(x)+1)(\beta u^-)^2\,d \mu-\frac{1}{2}\int_{V}
(\beta u^{-})^2\log(\beta u^{-})^2\,d\mu-\frac{\alpha\beta}{2}K(u)\\=&J\left(\alpha u^+\right)+J(\beta u^-)-\frac{\alpha\beta}{2}K(u).
\end{aligned}
$$

\
\

(ii) It follows from Proposition \ref{o} that
$$
\begin{aligned}
&\langle J^{\prime}(\alpha u^{+}+\beta u^{-}),\alpha u^{+}\rangle\\=&
\int_{V} \nabla^s (\alpha u^{+}+\beta u^{-})\nabla^s (\alpha u^{+})\,d\mu+\int_{V}h(x)(\alpha u^{+}+\beta u^{-})(\alpha u^{+})\,d \mu\\&-\int_{V}(\alpha u^{+}+\beta u^{-})(\alpha u^{+})\log (\alpha u^{+}+\beta u^{-})^2\,d\mu\\=& \left(\int_{V} \left|\nabla^s(\alpha u^{+})\right|^2 d \mu-\frac{\alpha\beta}{2} K(u)\right)+\int_{V}h(x)(\alpha u^{+})^2\,d \mu-\int_{V}(\alpha u^{+})^{2}\log (\alpha u^{+})^2\,d\mu \\=& \int_{V}|\nabla^s(\alpha u^{+})|^2 +h(x)(\alpha u^{+})^2\,d \mu-\int_{V}(\alpha u^{+})^{2}\log (\alpha u^{+})^2\,d\mu-\frac{\alpha\beta}{2} K(u)\\=&\langle J^{\prime}\left(\alpha u^{+}\right),\alpha u^{+}\rangle-\frac{\alpha\beta}{2}K(u).
\end{aligned}
$$

\
\

(iii) The proof is similar to that of (ii), we omit it here.
\end{proof}

\section{Proof of Theorem \ref{t1}}
In this section, under the assumptions $(h_1)$ and $(h_2)$, we prove the existence of ground state solutions to the equation (\ref{0.2}). First, we recall some known results about the lower semicontinuous functional, see \cite{S}.

\begin{df}
 Let $X$ be a real Banach space and $I$ a functional on $X$ such that $I=\Phi+\Psi$, where $\Phi\in C^1(X,\mathbb{R})$ and $\Psi: X\rightarrow\mathbb{R}\cup\{+\infty\} $ with $\Psi\not\equiv+\infty$ is convex and lower semicontinuous. A Palais-Smale sequence at level $d\in\mathbb{R}$, $(PS)_d$ sequence for short, for the functional $I$ is a sequence $\{u_k\}\subset X$ such that $I(u_k)\rightarrow d$ and
\begin{equation}\label{37}
\langle \Phi'(u_k),v-u_k\rangle+\Psi(v)-\Psi(u_k)\geq -\varepsilon_k\|v-u_k\|,\quad v\in X,
\end{equation}
where $\varepsilon_k\rightarrow 0.$ 

An equivalent definition is that a sequence $\{u_k\}\subset X$ such that $I(u_k)\rightarrow d$ and
\begin{equation}\label{36}
  \langle \Phi'(u_k),v-u_k\rangle+\Psi(v)-\Psi(u_k)\geq \langle z_k, v-u_k\rangle,\quad v\in X,
\end{equation}
where $z_k=I'(u_k)\rightarrow 0.$ Moreover, we say that $I$ satisfies $(PS)$ condition if each $(PS)$ sequence has a convergent subsequence.   
\end{df}

The following result is about the mountain pass theorem without $(PS)$ condition, see \cite[Theorem 3.1]{AD}.
\begin{thm}\label{lt}
Let $X$ be a real Banach space and $I: X\rightarrow\mathbb{R}\cup\{+\infty\}$ be a functional such that
\begin{itemize}
    \item[(i)] $I=\Phi+\Psi$, where $\Phi\in C^1(X,\mathbb{R})$ and $\Psi: X\rightarrow\mathbb{R}\cup\{+\infty\} $ with $\Psi\not\equiv+\infty$ is convex and lower semicontinuous;
    \item[(ii)] $I(0)=0$ and $I|_{\partial B_\rho}\geq \alpha$, for some constants $\rho,\alpha>0$;
    \item[(iii)] $I(e)\leq 0$, for some $e\not\in\bar{B}_\rho(0)$. 
\end{itemize}
If $$d_0:=\inf _{\gamma \in \Gamma} \max_{t \in[0,1]} I(\gamma(t)),\qquad
\Gamma=\{\gamma \in C([0,1], X): \gamma(0)=0, I(\gamma(1))<0\},
$$
then for a given $\varepsilon>0$, there is a $u_\varepsilon\in X$ such that
$$I(u_\varepsilon)\in[d_0-\varepsilon,d_0+\varepsilon],$$
and
$$\langle \Phi'(u_\varepsilon),v-u_\varepsilon\rangle+\Psi(v)-\Psi(u_\varepsilon)\geq -3\varepsilon\|v-u_\varepsilon\|,\quad v\in X.$$

\end{thm}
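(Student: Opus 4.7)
The plan is to deduce this non-smooth mountain pass theorem by applying Ekeland's variational principle to the path functional
$$F(\gamma) := \max_{t \in [0,1]} I(\gamma(t))$$
on a suitable complete metric space of admissible paths endowed with the uniform distance $d_\infty$. First I would verify that $\Gamma$ is nonempty (immediate from (iii)), that $F$ is lower semicontinuous (l.s.c.\ of $I$ combined with compactness of $[0,1]$), and that $F(\gamma) \geq \alpha > 0$ for every $\gamma \in \Gamma$ by (ii), since any continuous path from $0$ to $e \notin \overline{B_\rho(0)}$ must cross the sphere $\partial B_\rho$. In particular $d_0 \in [\alpha, +\infty)$, with finiteness coming from the implicit assumption that $\Gamma$ contains at least one path on which $I$ stays finite.

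For the given $\varepsilon > 0$, Ekeland's variational principle produces a near-minimiser $\gamma_\varepsilon \in \Gamma$ with $F(\gamma_\varepsilon) \in [d_0, d_0 + \varepsilon]$ and
$$F(\gamma) - F(\gamma_\varepsilon) \geq -\varepsilon\, d_\infty(\gamma, \gamma_\varepsilon), \qquad \forall \gamma \in \Gamma.$$
The task is then to locate a maximum point $t_\varepsilon$ of $I\circ \gamma_\varepsilon$ at which $u_\varepsilon := \gamma_\varepsilon(t_\varepsilon)$ satisfies the required variational inequality. For any test point $v \in X$ with $\Psi(v) < +\infty$, a continuous cut-off $\psi$ supported near a fixed $t_* \in M_\varepsilon := \{t : I(\gamma_\varepsilon(t)) = F(\gamma_\varepsilon)\}$ with $\psi(t_*) = 1$, and small $h > 0$, form the competitor
$$\gamma_h(t) = \gamma_\varepsilon(t) + h \psi(t)\bigl(v - \gamma_\varepsilon(t)\bigr).$$
Convexity of $\Psi$ gives $\Psi(\gamma_h(t)) \leq (1 - h\psi(t))\Psi(\gamma_\varepsilon(t)) + h\psi(t)\Psi(v)$, so $\gamma_h \in \Gamma$ for small $h$. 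Inserting $\gamma_h$ in the Ekeland inequality, expanding $\Phi$ to first order, dividing by $h$ and passing to the limit $h \to 0^+$ yields, at some $t_* \in M_\varepsilon$, a directional estimate of the form
$$\langle \Phi'(\gamma_\varepsilon(t_*)), v - \gamma_\varepsilon(t_*)\rangle + \Psi(v) - \Psi(\gamma_\varepsilon(t_*)) \geq -\varepsilon \|v - \gamma_\varepsilon(t_*)\|.$$

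The main difficulty is to extract a single $u_\varepsilon$ for which this inequality holds for \emph{every} $v \in X$, since the $t_*$ produced above may a priori depend on $v$. I would argue by contradiction: if no $u_\varepsilon \in \gamma_\varepsilon(M_\varepsilon)$ worked, then for each $t \in M_\varepsilon$ some direction $v_t$ would violate the inequality with constant $3\varepsilon$, and by lower semicontinuity the violation would persist on a neighbourhood of $t$. A finite partition of unity over the compact set $M_\varepsilon \subset [0,1]$ would paste these local directions into a global perturbation $\gamma_h$ along which $F$ strictly decreases by more than $\varepsilon\, d_\infty(\gamma_h, \gamma_\varepsilon)$, contradicting Ekeland. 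The factor $3$ in $3\varepsilon$ absorbs the Ekeland constant, the $o(h)$ error from the $C^1$-expansion of $\Phi$, and the overlap loss of the partition of unity; crucially, the non-smooth part $\Psi$ enters only through convexity and lower semicontinuity, so no pseudo-gradient flow or smooth deformation is required.
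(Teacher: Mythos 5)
First, a point of comparison: the paper does not prove this theorem at all --- it is quoted from \cite[Theorem 3.1]{AD} (the statement belongs to Szulkin's minimax theory for functionals $I=\Phi+\Psi$, cf.\ \cite{S}), so there is no in-paper argument to measure yours against. Your strategy --- Ekeland's variational principle applied to $F(\gamma)=\sup_{t}I(\gamma(t))$ on the path space, followed by a contradiction/partition-of-unity argument to produce a single $u_\varepsilon$ --- is indeed the standard route to results of this type, and you correctly isolate the main difficulty (the maximizing parameter depending on the test direction $v$) and the fact that $\Psi$ enters only through convexity and lower semicontinuity.

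There are, however, genuine gaps as written. The most serious is that you anchor the whole perturbation on $M_\varepsilon=\{t: I(\gamma_\varepsilon(t))=F(\gamma_\varepsilon)\}$ being a nonempty compact set: but $I$ is only lower semicontinuous, and an l.s.c.\ function on a compact set attains its infimum, not its supremum, so $M_\varepsilon$ may be empty (e.g.\ $f(t)=t$ on $[0,1)$, $f(1)=0$ is l.s.c.\ with unattained sup). One must instead work on the almost-maximum set $\{t: I(\gamma_\varepsilon(t))>F(\gamma_\varepsilon)-\delta\}$, which is open and not compact (its closure can contain points where $I\circ\gamma_\varepsilon$ drops, again by mere l.s.c.), forcing a locally finite but generally infinite partition of unity together with uniform bounds on $\|v_i\|$ and $\Psi(v_i)$ so that the $O(h)$ expansions are uniform in $t$. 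Second, to contradict Ekeland you must compare the decrease $3\varepsilon h\sum_i\psi_i(t)\|v_i-\gamma_\varepsilon(t)\|$ at the new near-maximum point with the penalty $\varepsilon\,d_\infty(\gamma_h,\gamma_\varepsilon)=\varepsilon h\sup_s\|\sum_i\psi_i(s)(v_i-\gamma_\varepsilon(s))\|$; these need not be comparable unless the violating directions are first normalized to a common length $\|v_i-\gamma_\varepsilon(t_i)\|=r$. This normalization is legitimate --- if $\langle\Phi'(u),v-u\rangle+\Psi(v)-\Psi(u)<-3\varepsilon\|v-u\|$ then the same holds for $u+s(v-u)$, $s\in(0,1]$, by convexity of $\Psi$ --- but the step is missing, and without it the "factor $3$ absorbs everything" claim does not hold. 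A lesser issue: $(\Gamma,d_\infty)$ is not complete, since the constraint $I(\gamma(1))<0$ is not closed under uniform limits for l.s.c.\ $I$; one should fix the endpoint, e.g.\ apply Ekeland in $\{\gamma:\gamma(0)=0,\ \gamma(1)=\gamma_0(1)\}$ for a near-optimal $\gamma_0$. All three points are repairable, but the proof as sketched would not go through; Szulkin's original argument sidesteps the first two by constructing a deformation on the whole strip $\{d_0-\varepsilon\le I\le d_0+\varepsilon\}$ in $X$ rather than perturbing one path near its maximum set.
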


\
\

As we said before, there exists $u\in\mathcal{H}_s(V)$ such that $\int_{V}u^{2} \log u^{2} d \mu=-\infty$. This leads to the lack of smoothness for $J$ on $\mathcal{H}_s.$ In order to overcome this difficulty, we decompose the functional $J$ into a sum of a $C^{1}$ functional and a convex lower semicontinuous functional.

For $t\in \mathbb{R}$ and $\delta>0$, let
$$
A(t)= \begin{cases}
-\frac{1}{2} t^{2} \log t^{2}, & 0\leq|t|\leq \delta, \\ -\frac{1}{2} t^{2}\left(\log \delta^{2}+3\right)+2 \delta|t|-\frac{1}{2} \delta^{2}, & |t| \geq \delta,
\end{cases}
$$
and
$$
B(t)=\frac{1}{2} t^{2} \log t^{2}+A(t).
$$
Then we have the below proposition, see \cite{JS,SS}.
\begin{prop} \label{p1}  For functions $A(t)$ and $B(t)$, we  have the properties: 
\begin{itemize}
\item [(i)] $A(t), B(t) \in C^{1}(\mathbb{R}, \mathbb{R})$;
    \item[(ii)] for $\delta>0$ small, $A(t)$ is a nonnegative convex and even function and satisfies $A^{\prime}(t) t \geq 0 $ with $t \in \mathbb{R}$;
\item[(iii)] for $p \in(2,+\infty)$, there exists $C>0$ such that $\left|B^{\prime}(t)\right| \leq C|t|^{p-1}$ with $t \in \mathbb{R}.$
\end{itemize}
\end{prop}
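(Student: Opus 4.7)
The strategy is to verify each property by direct computation from the piecewise definition, fixing $\delta$ small enough that all smallness hypotheses hold at once. I will choose $\delta$ so that $\log\delta^2+3\leq 0$, i.e.\ $\delta\leq e^{-3/2}$, which automatically forces $\delta\leq e^{-1}$ and $\delta\leq 1$, covering every threshold that appears below.

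For (i), the only non-trivial junction is $|t|=\delta$. On the inner region a direct differentiation gives $A'(t)=-t\log t^2-t$ for $0<|t|\leq\delta$, and $t\log t^2\to 0$ lets me extend by $A'(0)=0$. On the outer region, for $t>0$ I get $A'(t)=-t(\log\delta^2+3)+2\delta$. Plugging $t=\delta$ into both formulas yields the common value $-\delta\log\delta^2-\delta$, so $A\in C^{1}(\mathbb{R})$; evenness gives the match at $t=-\delta$ as well. For $B=\tfrac{1}{2}t^{2}\log t^{2}+A$, the inner piece is identically $0$, whereas on the outer piece an explicit computation gives $B'(t)=t\log(t^{2}/\delta^{2})-2(t-\delta\,\mathrm{sgn}(t))$, which vanishes at $|t|=\delta$; hence $B\in C^{1}(\mathbb{R})$.

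For (ii), evenness is immediate. Nonnegativity on the inner region follows from $-\tfrac{1}{2}t^{2}\log t^{2}\geq 0$ since $|t|\leq\delta\leq 1$. On the outer region, viewing $A$ as a function of $s=|t|\geq\delta$, I compute $\partial_{s}^{2}A=-(\log\delta^{2}+3)\geq 0$ and $\partial_{s}A|_{s=\delta}=-\delta\log\delta^{2}-\delta>0$ (using $\delta< e^{-1/2}$), so $A$ is increasing in $s$ for $s\geq\delta$ and thus $A(t)\geq A(\delta)=-\tfrac{1}{2}\delta^{2}\log\delta^{2}\geq 0$. For global convexity, $A''(t)=-\log t^{2}-2\geq 0$ on $(0,\delta)$ because $|t|\leq\delta\leq e^{-1}$, and $A''\geq 0$ on the outer region by the same computation; combined with the $C^{1}$ gluing already obtained, $A$ is convex on $\mathbb{R}$. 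Finally, $A'$ is odd with $A'(0)=0$ and nondecreasing, so $A'(t)$ has the sign of $t$, giving $A'(t)t\geq 0$.

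For (iii), the key elementary inequality is $\log r\leq r^{p-2}/(p-2)$ for $r\geq 1$ (comparing values at $r=1$ and derivatives). On $|t|\leq\delta$, $B'(t)=0$ and the estimate is trivial. On $|t|\geq\delta$, using the explicit form of $B'$ above, I bound $|t|\log(t^{2}/\delta^{2})=2|t|\log(|t|/\delta)\leq \tfrac{2}{(p-2)\delta^{p-2}}|t|^{p-1}$ and $2|t-\delta\,\mathrm{sgn}(t)|\leq 2|t|\leq \tfrac{2}{\delta^{p-2}}|t|^{p-1}$, the last step because $|t|/\delta\geq 1$. Adding the two gives $|B'(t)|\leq C|t|^{p-1}$ for a constant $C=C(p,\delta)$. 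The main obstacle, if there is one, is purely bookkeeping: several distinct smallness requirements on $\delta$ appear (needed for nonnegativity, for $A''\geq 0$ on each piece, and for the monotonicity argument), and it must be checked that they are mutually compatible, which they are under the single choice $\delta\leq e^{-3/2}$.
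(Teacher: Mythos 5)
Your proof is correct, and it is exactly the kind of direct piecewise verification that the paper delegates to its references \cite{JS,SS} without writing out (the paper gives no proof of this proposition). One small slip: on $0<|t|<\delta$ the second derivative is $A''(t)=-\log t^{2}-3$, not $-\log t^{2}-2$, so the nonnegativity of $A''$ there needs your global choice $\delta\le e^{-3/2}$ rather than the weaker $\delta\le e^{-1}$ you invoke at that point; since you fixed $\delta\le e^{-3/2}$ at the outset, the argument still goes through.
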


\
\

Let
$$
J_1(u)=\frac{1}{2} \int_{V}\left(|\nabla^s u|^{2}+(h(x)+1)\right)u^{2}) d \mu-\int_{V} B(u)\,d\mu,
$$
and
$$
J_2(u)=\int_{V}A(u)\,d\mu.
$$
For $u \in\mathcal{H}^s,$ by Proposition \ref{p1}, we have that
\begin{equation*}
J(u)=J_1(u)+J_2(u),
\end{equation*}
where $J_1\in C^1(\mathcal{H}^s,\mathbb{R})$ and $J_2\geq 0$ is convex and lower semicontinuous.

\begin{lm}\label{cm}
Let $\{u_k\}$ be a sequence in $\mathcal{H}_s$.  If $J(u_k)$ is bounded in $\mathcal{H}_s$, then $J'(u_k)\rightarrow 0$ if and only if $\{u_k\}$ is a $(PS)$ sequence for $J.$ 
\end{lm}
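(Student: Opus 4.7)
The plan is to exploit the splitting $J=J_1+J_2$ already at hand (with $J_1\in C^1(\mathcal{H}_s,\mathbb{R})$ and $J_2$ nonnegative, convex and lower semicontinuous) and to reduce each implication to a single standard device. Since boundedness of $J(u_k)$ forces $u_k\in D(J)$, the functional $J_2(u_k)=\int_V A(u_k)\,d\mu$ is finite, and because $A\in C^1(\mathbb{R},\mathbb{R})$ the Gateaux derivative of $J_2$ at $u_k$ exists and is formally given by $\langle J_2'(u_k),w\rangle=\int_V A'(u_k)w\,d\mu$.

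For the direction ``$J'(u_k)\to 0 \Rightarrow \{u_k\}$ is $(PS)$'', I would simply invoke the convexity of $J_2$, which yields the subgradient inequality
$$J_2(v)-J_2(u_k)\geq \langle J_2'(u_k),v-u_k\rangle, \quad v\in\mathcal{H}_s.$$
Adding $\langle J_1'(u_k),v-u_k\rangle$ to both sides produces
$$\langle J_1'(u_k),v-u_k\rangle+J_2(v)-J_2(u_k)\geq \langle J'(u_k),v-u_k\rangle,$$
which is precisely (\ref{36}) with $z_k:=J'(u_k)\to 0$; hence $\{u_k\}$ is a $(PS)$ sequence.

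For the reverse direction, I would test (\ref{36}) against $w\in C_c(V)$: substitute $v=u_k\pm tw$ with $t>0$, divide by $t$ and let $t\to 0^+$. Because $w$ has finite support, the difference quotient $[J_2(u_k+tw)-J_2(u_k)]/t$ is a finite sum of one-variable difference quotients of $A$, each converging to $A'(u_k(x))w(x)$ by $A\in C^1$, so the limit equals $\langle J_2'(u_k),w\rangle$. This yields $\pm\langle J'(u_k),w\rangle\geq\pm\langle z_k,w\rangle$, hence $\langle J'(u_k),w\rangle=\langle z_k,w\rangle$ for every $w\in C_c(V)$. Density of $C_c(V)$ in $\mathcal{H}_s$ then promotes this identity to $J'(u_k)=z_k$ in $\mathcal{H}_s^*$, and therefore $J'(u_k)\to 0$.

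The most delicate step will be the passage to the limit $t\to 0^+$ inside the nonlinear term $\int_V A(u_k+tw)\,d\mu$; restricting test functions to $C_c(V)$ in the reverse direction sidesteps the need for a uniform dominated-convergence bound on the whole lattice and reduces the matter to the one-variable $C^1$ regularity of $A$. The convexity argument in the forward direction avoids this issue entirely because the required inequality holds pointwise in $v$ without any limit passage.
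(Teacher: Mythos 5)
Your proof is correct and amounts to the same thing as the paper's: the paper disposes of this lemma in one line by invoking the equivalence of (\ref{37}) and (\ref{36}) from Szulkin's framework, and your two directions --- the subgradient inequality for the convex part $J_2$ in one direction, and testing (\ref{36}) with $v=u_k\pm tw$ for $w\in C_c(V)$, letting $t\to 0^+$ and then using density of $C_c(V)$ in $\mathcal{H}_s$ in the other --- constitute precisely the standard argument behind that cited equivalence. In particular, your device of restricting to finitely supported test directions, which reduces the differentiation of $J_2$ to finitely many one-variable $C^1$ limits, is the right way to sidestep the dominated-convergence issue you flag.
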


\begin{proof}
Since $J=J_1+J_2$, where $J_1\in C^1(\mathcal{H}^s,\mathbb{R})$ and $J_2$ is convex and lower semicontinuous,  the result follows from the equivalence of (\ref{37}) and (\ref{36}).   
\end{proof}

Next, we show that the functional J satisfies the mountain pass geometry.
\begin{lm}\label{lh}
Let $\left(h_{1}\right)$ hold. Then we have
 \begin{itemize}
     \item[(i)]  for any $u \in \mathcal{H}_s$, there exist $\alpha, \rho>0$ such that $J(u) \geq \alpha$  with $\|u\|_{\mathcal{H}_s}=\rho$;
     \item[(ii)] there exists $e \in \mathcal{H}_s$ with $\|e\|>\rho$ such that $J(e)<0$.
 \end{itemize}   
\end{lm}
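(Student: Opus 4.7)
The plan is to exhibit both the local positivity of $J$ near the origin and a ray along which $J$ escapes to $-\infty$, using only elementary estimates on the logarithmic term together with the embedding $\mathcal{H}_s \hookrightarrow \ell^q$ supplied by \eqref{ac}.

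For (i), the essential input is that for any fixed $q>2$ there exists $C_q>0$ such that $t^2\log t^2 \leq C_q|t|^q$ for $|t|\geq 1$, while $t^2\log t^2\leq 0$ for $|t|\leq 1$. Splitting the sum over $\{|u(x)|\leq 1\}$ and $\{|u(x)|>1\}$ therefore yields the one-sided bound
$$
\int_{\mathbb{Z}^d} u^2\log u^2\,d\mu \;\leq\; C_q\|u\|_q^q
$$
whenever the left-hand side is finite; if it equals $-\infty$ then $J(u)=+\infty$ and the inequality to be proved is trivial. Invoking $\|u\|_q\leq (h_0+1)^{-1/2}\|u\|_{\mathcal{H}_s}$ from \eqref{ac} then gives
$$
J(u) \;\geq\; \tfrac{1}{2}\|u\|_{\mathcal{H}_s}^2 \;-\; \widetilde{C}_q\|u\|_{\mathcal{H}_s}^q,
$$
and since $q>2$, choosing $\rho>0$ small enough forces $J(u)\geq \alpha>0$ on the sphere $\|u\|_{\mathcal{H}_s}=\rho$.

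For (ii), I would fix any $x_0\in\mathbb{Z}^d$ and take $u_0$ equal to $1$ at $x_0$ and $0$ elsewhere, so that $u_0\in C_c(\mathbb{Z}^d)\subset D(J)$, $\|u_0\|_2=1$, and $\int u_0^2\log u_0^2\,d\mu=0$. A direct scaling gives
$$
J(tu_0) \;=\; \tfrac{t^2}{2}\|u_0\|_{\mathcal{H}_s}^2 \;-\; t^2\log t,
$$
which tends to $-\infty$ as $t\to +\infty$. For $t$ large one has simultaneously $\|tu_0\|_{\mathcal{H}_s}>\rho$ and $J(tu_0)<0$, so we may set $e:=tu_0$. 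No genuine obstacle arises at this stage: the only real care is in (i), where one must separate the contributions from $\{|u|\leq 1\}$ and $\{|u|>1\}$ so as to exploit the non-positivity of $t^2\log t^2$ on the small-value regime, and treat the $\int u^2\log u^2\,d\mu=-\infty$ case as a triviality rather than as a genuine singularity.
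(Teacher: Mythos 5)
Your proposal is correct, and both parts land on the same estimates as the paper; the only real difference is in how part (i) is packaged. The paper first introduces the truncation functions $A$ and $B$ of Proposition \ref{p1}, writes $J=J_1+J_2$ with $J_2=\int_V A(u)\,d\mu\ge 0$, and then uses $|B(t)|\le C|t|^p$ to get $J(u)\ge \tfrac12\|u\|_{\mathcal{H}_s}^2-C\|u\|_{\mathcal{H}_s}^p$. You obtain the identical inequality by splitting the sum over $\{|u|\le 1\}$ and $\{|u|>1\}$ and using the elementary bound $t^2\log t^2\le C_q|t|^q$ for $|t|\ge 1$ together with (\ref{ac}); this is essentially the $A$/$B$ decomposition in disguise (with $\delta=1$), but it is more self-contained since it does not invoke Proposition \ref{p1}, and your observation that the case $\int_V u^2\log u^2\,d\mu=-\infty$ forces $J(u)=+\infty$ correctly disposes of the only degenerate situation (the positive part of the integrand is always summable for $u\in\mathcal{H}_s$, so the integral cannot be ill-defined). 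For part (ii) the paper scales a general $u\in D(J)\setminus\{0\}$ and computes $J(tu)=\tfrac{t^2}{2}\bigl[\|u\|_{\mathcal{H}_s}^2-\int_V u^2\log u^2\,d\mu-\log t^2\int_V u^2\,d\mu\bigr]\to-\infty$; your choice of the Dirac mass $u_0$ is just the cleanest concrete instance of this and is perfectly adequate for an existence statement. The extra generality of the paper's version of (ii) is not gratuitous, though: the computation for arbitrary $u\in D(J)\setminus\{0\}$ is reused later (Lemmas \ref{lg} and \ref{ld}) to show every ray through a nonzero element of $D(J)$ eventually makes $J$ negative, which is what links the mountain pass level to the Nehari level.
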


\begin{proof}
 (i) 
 For $p>2$, by Proposition \ref{p1}, one gets that
 $$|B(t)|\leq C|t|^p.$$
 Moreover, for $t\in \mathbb{R}$, we have $A(t) \geq 0$. Then it follows (\ref{ac}) that
$$
\begin{aligned}
  J(u)=&\frac{1}{2}\|u\|_{\mathcal{H}_s}^{2}+ \int_{V} A(u)\,d\mu-\int_{V}B(u)\,d\mu\\ \geq &\frac{1}{2}\|u\|_{\mathcal{H}_s}^{2}-\int_{V} B(u)\,d \mu \\ \geq &\frac{1}{2}\|u\|_{\mathcal{H}_s}^{2}-C\|u\|_p^{p}\\ \geq &\frac{1}{2}\|u\|_{\mathcal{H}_s}^{2}-C\|u\|_{\mathcal{H}_s}^{p}.
\end{aligned}
$$
Since $p>2$, there exists
$\alpha>0$ and $\rho>0$ such that $\|u\|_{\mathcal{H}_s}=\rho>0$.\\

(ii) Let $u \in D(J) \backslash\{0\}$. As $t \rightarrow+\infty$, one gets that
\begin{equation}\label{ud}
\begin{aligned}
 J(tu)=&\frac{t^{2}}{2} \int_{V}\left(|\nabla^s u|^{2}+(h(x)+1) u^{2}\right)\,d\mu-\frac{1}{2}\int_{V}(tu)^2\log(tu)^2\,d\mu\\=&\frac{t^{2}}{2}\left[\|u\|_{\mathcal{H}_s}^{2}-\int_{V} u^{2} \log u^{2} d \mu-\log t^{2}\int_{V} u^{2}\,d\mu\right]\\ \rightarrow&-\infty.   
\end{aligned}
\end{equation}
Therefore, there exists $t_0>0$ large enough such that $\|t_0 u\|_{\mathcal{H}_s}>\rho$ and $J(t_0 u)<0$. By taking $e=t_0 u$, we get the result.

\end{proof}

\begin{lm}\label{lg}
 Let $\left(h_{1}\right)$ hold. Then 
for any $u \in  D(J)\backslash\{0\}$, there exists a unique $t_{u}>0$ such that $t_{u} u \in \mathcal{N}$ and $J(t_{u} u)=$ $\max\limits_{t>0} J(t u)$.
  
\end{lm}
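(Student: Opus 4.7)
The plan is to fix $u \in D(J) \setminus \{0\}$ and reduce everything to a one-variable analysis of the function $g(t) := J(tu)$ for $t > 0$. First I would observe that $tu \in D(J)$ for every $t > 0$: writing $(tu)^2 \log(tu)^2 = t^2 u^2 \log t^2 + t^2 u^2 \log u^2$ and using $u \in D(J) \subset \ell^2(V)$, the required integrability follows directly. Then, as already computed in (\ref{ud}), one has the explicit formula
\begin{equation*}
g(t) = \frac{t^2}{2}\|u\|_{\mathcal{H}_s}^2 - \frac{t^2 \log t^2}{2}\int_V u^2 \, d\mu - \frac{t^2}{2}\int_V u^2 \log u^2 \, d\mu,
\end{equation*}
which is of class $C^1$ on $(0,+\infty)$.

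Next I would differentiate and factor:
\begin{equation*}
g'(t) = t\Bigl[\|u\|_{\mathcal{H}_s}^2 - \int_V u^2 \log u^2 \, d\mu - \int_V u^2\, d\mu - \log t^2 \int_V u^2\, d\mu\Bigr].
\end{equation*}
A direct expansion shows $\langle J'(tu), tu\rangle = t\, g'(t)$, so the Nehari condition $tu \in \mathcal{N}$ is equivalent, for $t>0$, to the bracket in $g'(t)$ vanishing. Since $u \neq 0$ implies $\int_V u^2 \, d\mu > 0$ and since $\log t^2$ is a strictly increasing bijection from $(0,+\infty)$ onto $\mathbb{R}$, the equation
\begin{equation*}
\log t^2 = \frac{\|u\|_{\mathcal{H}_s}^2 - \int_V u^2 \log u^2 \, d\mu - \int_V u^2\, d\mu}{\int_V u^2\, d\mu}
\end{equation*}
admits one and only one solution $t_u \in (0,+\infty)$. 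This produces the required unique $t_u$ with $t_u u \in \mathcal{N}$.

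Finally, to identify $t_u$ as the point where $g$ is maximized, I would read off from the bracket that $g'(t) > 0$ for $t \in (0, t_u)$ and $g'(t) < 0$ for $t \in (t_u, +\infty)$, because the coefficient $-\int_V u^2 \, d\mu$ of $\log t^2$ is strictly negative and $\log t^2$ is increasing. Combined with $g(0) = 0$ and $g(t) \to -\infty$ as $t \to +\infty$ (which is precisely (\ref{ud})), this gives $J(t_u u) = \max_{t>0} J(tu) > 0$. There is no serious obstacle: the only mild subtlety is confirming that the logarithmic term is handled correctly so that $g$ is genuinely $C^1$ on $(0,+\infty)$ and that $tu \in D(J)$ for all $t>0$, both of which follow from the splitting of $\log(tu)^2$ and $u \in D(J) \cap \ell^2(V)$.
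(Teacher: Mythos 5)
Your proposal is correct and follows essentially the same route as the paper: both rest on the explicit formula $J(tu)=\frac{t^2}{2}\bigl[\|u\|_{\mathcal{H}_s}^2-\int_V u^2\log u^2\,d\mu-\log t^2\int_V u^2\,d\mu\bigr]$, and both reduce uniqueness to the strict monotonicity of $\log t^2$ against $\int_V u^2\,d\mu>0$ (the paper phrases this as a contradiction between two Nehari identities, you as the bijectivity of $\log t^2$). Your explicit check that $tu\in D(J)$ and that $\langle J'(tu),tu\rangle=t\,g'(t)$ is a small but welcome addition that the paper leaves implicit.
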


\begin{proof} Fix $u \in D(J)\backslash\{0\}$. For $t>0$, 
$$
\begin{aligned}
J(tu)=\frac{t^{2}}{2}\left[\|u\|_{\mathcal{H}_s}^{2}-\int_{V} u^{2} \log u^{2} d \mu-\log t^{2}\int_{V} u^{2}\,d\mu\right].
\end{aligned}
$$
One gets easily that $J(t u)>0$ for $t>0$ small enough.
Moreover, by (\ref{ud}), we have that
$ J(tu) \rightarrow-\infty$ as $t\rightarrow+\infty.$
Thus $\max\limits_{t>0} J(t u)$ is achieved at some $t_{u}>0$ with $t_{u} u \in \mathcal{N}$. 

Now we show the uniqueness of $t_{u}$. By contradiction, suppose that there exist $t_{u}^{\prime}>t_{u}>0$ such that $t_{u}^{\prime} u, t_{u} u \in \mathcal{N}$. Then we have
$$
\begin{aligned}
& \|u\|_{\mathcal{H}_s}^2=\log (t_u)^2\int_{V}u^2\,d\mu+\int_{V}u^2\log u^2\,d\mu+\int_{V}u^2\,d\mu, \\
& \|u\|_{\mathcal{H}_s}^2=\log (t'_u)^2\int_{V}u^2\,d\mu+\int_{V}u^2\log u^2\,d\mu+\int_{V}u^2\,d\mu.
\end{aligned}
$$
This implies that
$$
\begin{aligned}
0=\left(\log (t_u)^2-\log (t'_u)^2\right)\int_{V}u^2\,d\mu<0,
\end{aligned}
$$
which is a contradiction.
\end{proof}

\begin{lm}\label{ld}  Let $\left(h_{1}\right)$ and $(h_2)$ hold. Then we have
$$c=\inf_{u \in \mathcal{N}} J(u)=\inf _{\gamma \in \Gamma} \max_{t \in[0,1]} J(\gamma(t))=:c_0>0,$$ 
where $$
\Gamma=\{\gamma \in C([0,1], \mathcal{H}_s): \gamma(0)=0, J(\gamma(1))<0\}.
$$
\end{lm}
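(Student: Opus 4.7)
The plan is to prove $c_0 > 0$, then establish the two inequalities $c_0 \leq c$ and $c \leq c_0$ separately.

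\emph{Positivity and $c_0 \leq c$.} Positivity is immediate from the mountain pass geometry: the estimate $J(u) \geq \tfrac{1}{2}\|u\|_{\mathcal{H}_s}^2 - C\|u\|_{\mathcal{H}_s}^p$ from the proof of Lemma \ref{lh}(i) gives $J \geq 0$ on $\overline{B_\rho}$ (for $\rho$ small enough) together with $J \geq \alpha$ on $\partial B_\rho$. So $J(\gamma(1)) < 0$ forces $\|\gamma(1)\|_{\mathcal{H}_s} > \rho$, and continuity of $\gamma$ makes it cross $\partial B_\rho$, whence $\max_t J(\gamma(t)) \geq \alpha$ and $c_0 \geq \alpha > 0$. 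For $c_0 \leq c$, fix $u \in \mathcal{N}$ and pick $t_1 > 1$ with $J(t_1 u) < 0$, which is possible by (\ref{ud}). The ray $\gamma(t) := t\, t_1 u$ lies in $\Gamma$, and by Lemma \ref{lg} together with $t_u = 1$ (since $u \in \mathcal{N}$) we have $\max_{s > 0} J(su) = J(u)$; hence $\max_{t \in [0,1]} J(\gamma(t)) = J(u)$, and taking the infimum over $\mathcal{N}$ gives $c_0 \leq c$.

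\emph{The reverse inequality $c \leq c_0$.} The strategy is to show that every $\gamma \in \Gamma$ meets $\mathcal{N}$; at a crossing point $t^*$ one then has $c \leq J(\gamma(t^*)) \leq \max_{t \in [0,1]} J(\gamma(t))$, and infimum over $\gamma$ gives the claim. Introduce $\Phi(u) := \langle J'(u), u\rangle = \|u\|_{\mathcal{H}_s}^2 - \int_V u^2 \, d\mu - \int_V u^2 \log u^2 \, d\mu$, so $u \in \mathcal{N}$ is equivalent to $\Phi(u) = 0$ together with $u \neq 0$. Let $\tau := \sup\{t \in [0,1] : \gamma(t) = 0\}$; this is attained by continuity and $\tau < 1$ since $\gamma(1) \neq 0$. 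For $t$ slightly larger than $\tau$, the element $\gamma(t)$ is nonzero and small in $\ell^\infty$ (by (\ref{ac}) and continuity of $\gamma$ into $\mathcal{H}_s$), so pointwise $|\gamma(t)(x)|^2 < e^{-1}$ gives $u^2 \log u^2 < -u^2$ and hence $\Phi(\gamma(t)) > \|\gamma(t)\|_{\mathcal{H}_s}^2 > 0$. Conversely, $J(\gamma(1)) < 0$ rearranges to $\|\gamma(1)\|_{\mathcal{H}_s}^2 < \int_V \gamma(1)^2 \log \gamma(1)^2 \, d\mu$, which yields $\Phi(\gamma(1)) < -\|\gamma(1)\|_2^2 < 0$. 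An intermediate-value argument on $\Phi \circ \gamma$ over $(\tau, 1]$ then supplies $t^*$ with $\Phi(\gamma(t^*)) = 0$ and $\gamma(t^*) \neq 0$, i.e., $\gamma(t^*) \in \mathcal{N}$.

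\emph{Main obstacle.} The subtle point is the continuity of $t \mapsto \Phi(\gamma(t))$, since $u \mapsto \int_V u^2 \log u^2 \, d\mu$ is not continuous on all of $\mathcal{H}_s$ (it can even take the value $-\infty$). I would handle this by invoking Proposition \ref{p1} to write $\tfrac{1}{2} u^2 \log u^2 = B(u) - A(u)$, so that $u \mapsto \int_V B(u) \, d\mu$ is $C^1$ on $\mathcal{H}_s$ while $u \mapsto \int_V A(u) \, d\mu$ is convex and lower semicontinuous. This makes $\Phi \circ \gamma$ upper semicontinuous on $(\tau, 1]$, which is enough for the argument: the value $t^* := \inf\{t \in (\tau, 1] : \Phi(\gamma(t)) \leq 0\}$ lies strictly inside the interval by the sign information above, upper semicontinuity forces $\Phi(\gamma(t^*)) \leq 0$, and taking a limit from below (where $\Phi > 0$) together with continuity of the quadratic part of $\Phi$ yields $\Phi(\gamma(t^*)) = 0$; since $t^* > \tau$, $\gamma(t^*) \neq 0$ and hence $\gamma(t^*) \in \mathcal{N}$, completing the proof.
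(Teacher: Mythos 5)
Your treatment of $c_0>0$ and of the inequality $c_0\le c$ is correct and is essentially what the paper does (Lemma \ref{lh} for the mountain pass geometry; Lemma \ref{lg} and the ray $t\mapsto t\,t_0u$ for the comparison with $\mathcal N$). The problem lies in the reverse inequality $c\le c_0$, where you replace the paper's argument by an ``every path in $\Gamma$ crosses $\mathcal N$'' argument, and the crossing step has a genuine gap. Writing $\Phi(u)=\langle J'(u),u\rangle$ and using $\tfrac12 t^2\log t^2=B(t)-A(t)$, one gets
$\Phi(u)=\|u\|_{\mathcal H_s}^2-\|u\|_2^2-2\int_V B(u)\,d\mu+2\int_V A(u)\,d\mu$.
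The first three terms are continuous on $\mathcal H_s$, but $u\mapsto\int_V A(u)\,d\mu$ is only \emph{lower} semicontinuous, and it enters $\Phi$ with a \emph{plus} sign; hence $\Phi$ is lower semicontinuous, not upper semicontinuous as you assert. Lower semicontinuity does make the sublevel set $\{t:\Phi(\gamma(t))\le 0\}$ closed, so $\Phi(\gamma(t^*))\le 0$ at your $t^*$, but it gives the wrong inequality for the limit from the left: you only obtain $\Phi(\gamma(t^*))\le\liminf_{t\uparrow t^*}\Phi(\gamma(t))$, which is useless for concluding $\Phi(\gamma(t^*))\ge 0$. Concretely, $\int_V A(\cdot)\,d\mu$ can jump \emph{down} in the limit (take $u_k$ equal to $e^{-k}$ on roughly $e^{2k}/k$ sites: $\|u_k\|_2^2=1/k\to 0$ while $\int_V A(u_k)\,d\mu\to\tfrac12$), so $\Phi\circ\gamma$ may pass from positive values directly to a strictly negative value without vanishing, and the path need not meet $\mathcal N$. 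Your intermediate-value step therefore does not close.

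The paper sidesteps this entirely: it invokes the mountain-pass theorem for lower semicontinuous perturbations (Theorem \ref{lt}) to produce a $(PS)_{c_0}$ sequence $\{u_k\}$, uses $2J(u_k)-\langle J'(u_k),u_k\rangle=\|u_k\|_2^2$ together with the norm equivalence of Lemma \ref{lp} (this is where $(h_2)$ enters) to show that $\{u_k\}$ is bounded and that $\|u_k\|_2$ stays away from $0$, projects each $u_k$ onto $\mathcal N$ as $t_ku_k$ via Lemma \ref{lg}, shows $\|u_k\|_2^2\log t_k^2=o_k(1)$ hence $t_k\to1$, and concludes $c\le J(t_ku_k)\to c_0$. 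To repair your route you would have to prove continuity of $t\mapsto\int_V A(\gamma(t))\,d\mu$ along the relevant paths, which is false in general; switching to the $(PS)$-sequence argument is the robust fix.
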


\begin{proof} 
By Lemma \ref{lh}, one gest easily that $c_0>0$. We first prove that $c\geq c_0$. In fact, by Lemma \ref{lg}, there exists a unique $t_u>0$ such that
$J(t_{u}u)=\max\limits_{t>0} J(tu)$. Then 
$$\inf\limits_{u\in D(J) \backslash\{0\}} \max _{t>0} J(tu)=\inf\limits_{u\in D(J) \backslash\{0\}}J(t_{u}u)= \inf\limits_{u\in\mathcal{N}} J(u)=c.$$
For $u\in D(J)\backslash\{0\}$, by (\ref{ud}), there exists $t_0>0$ such that $J(t_0u)<0$. Define
$$
\begin{aligned}
\gamma_0: [0,1] & \rightarrow \mathcal{H}_s, \\
 t & \mapsto tt_0u.
\end{aligned}
$$ 
Since $\gamma_0(0)=0$ and $J(\gamma_0(1))<0$, we have $\gamma_0\in\Gamma$. Then we have 
$$\max _{t>0} J(tu)\geq \max _{t\in[0,1]} J(tt_0u)=\max _{t\in[0,1]} J(\gamma_0(t))\geq \inf\limits_{\gamma\in\Gamma}\max\limits_{t\in [0,1]} J(\gamma(t)),$$
which implies that
$c\geq c_0.$

Next, we prove that $c\leq c_0.$  Indeed, by Theorem \ref{lt} and Lemma \ref{lh}, there exists a $(PS)_{c_0}$ sequence $\{u_{k}\} \subset \mathcal{H}_s$ for $J$. By Lemma \ref{cm}, we have that
\begin{equation}\label{29}
J(u_k)\rightarrow c_0,\quad \text{and}\quad J'(u_k)\rightarrow 0,\quad k\rightarrow+\infty.    
\end{equation}
Then it follows from Lemma \ref{lp} that
\begin{equation}\label{lx}
 \|u_k\|^2_2=2J(u_k)-\langle J'(u_k),u_k\rangle\leq C+o_k(1)\|u_k\|^2_{\mathcal{H}_s}\leq C+o_k(1)\|u_k\|^2_{2}.  
\end{equation}
This means that $\{u_{k}\}$ is bounded in $\ell^2(V)$, and hence bounded in $\mathcal{H}_s$. 

We claim that
$$
u_{k} \nrightarrow 0,\quad  \text {in } \ell^{2}(V).
$$
In fact, if $u_{k} \rightarrow 0$ in $\ell^2(V)$, by Lemma \ref{lp}, we get that
$$\|u_k\|_{\mathcal{H}_s}\rightarrow 0.$$
Moreover, for $p \in(2,+\infty)$, by (\ref{ac}), we have $$\|u_{k}\|_p\rightarrow 0,\quad k\rightarrow+\infty.$$ Then it follows from (iii) of Proposition \ref{p1} that
\begin{equation*}
\int_{V} |B\left(u_{k}\right)|\,d \mu\leq C\int_V|u_k|^p\,d\mu \rightarrow 0,\quad \text{and}\quad \int_{V} |B^{\prime}\left(u_{k}\right) u_{k}|\,d \mu\leq C\int_V|u_k|^p\,d\mu \rightarrow 0.
\end{equation*}
Note that 
\begin{equation}\label{jj}
 \langle J^{\prime}\left(u_{k}\right), u_{k}\rangle=\|u_k\|^2_{\mathcal{H}_s}+\int_{V} A^{\prime}\left(u_{k}\right)u_{k}\,d \mu-\int_{V} B^{\prime}\left(u_{k}\right)u_{k}\,d \mu.   
\end{equation}
As a consequence, we get that
$$
\begin{aligned}
&\lim\limits_{k\rightarrow+\infty}\int_{V} A^{\prime}\left(u_{k}\right)u_{k}\,d \mu\\=&\lim\limits_{k\rightarrow+\infty}\left[\left\langle J^{\prime}\left(u_{k}\right), u_{k}\right\rangle+\int_{V} B^{\prime}\left(u_{k}\right) u_{k}\,d \mu-\left\|u_{k}\right\|^2_{\mathcal{H}_s}\right]\\=&0.
\end{aligned}
$$
Hence $A^{\prime}\left(u_{k}\right) u_{k} \rightarrow 0$ in $\ell^{1}(V)$. Since $A(t)$ is a convex and even function and satisfies $A(t)\geq 0$ for $t \in \mathbb{R}$. Then we have that $0 \leq A(t) \leq A^{\prime}(t) t$, which implies that $$A\left(u_{k}\right) \rightarrow 0,\quad \text{in}~\ell^1(V).$$ Therefore, we get that 
\begin{equation}\label{30}
\begin{aligned}
 c_0=\lim\limits_{k\rightarrow+\infty} J(u_k)=\lim\limits_{k\rightarrow+\infty}\left[\frac{1}{2}\|u_k\|_{\mathcal{H}_s}^{2}+ \int_{V} A(u_k)\,d\mu-\int_{V}B(u_k)\,d\mu\right]=0.
\end{aligned}
\end{equation}
which contradicts the fact that $c_0>0$. Hence there exist $a, b>0$ such that
\begin{equation}\label{20}
0<a \leq\left\|u_{k}\right\|_{2} \leq b.
\end{equation}
For any $u_{k}\in D(J)\backslash\{0\}$, there exists a unique $t_{k}>0$ such that $t_{k} u_{k} \in \mathcal{N}$, namely
$$
\int_{V}\left(\left|\nabla^s u_{k}\right|^{2}+h(x)u_{k}^{2}\right)\,d \mu-\int_{V} u_{k}^{2} \log \left(t_{k} u_{k}\right)^{2}\,d \mu=0.
$$
Moreover, by (\ref{29}), we have
$$
\left\langle J^{\prime}\left(u_{k}\right), u_{k}\right\rangle=\int_{V}\left(\left|\nabla^s u_{k}\right|^{2}+h(x)u^2_{k}\right)\,d \mu-\int_{V} u_{k}^{2} \log u_{k}^{2}\,d \mu=o_{k}(1).
$$
The above arguments imply that
$$
\left\|u_{k}\right\|_{2}^{2} \log t^2_{k}=o_k(1).
$$
Combined with (\ref{20}), we get that $t_{k} \rightarrow 1$  as $k\rightarrow+\infty.$
Hence, we obtain that
$$
\begin{aligned}
\inf _{u \in \mathcal{N}} J(u) \leq & J\left(t_{k} u_{k}\right)\\ = & J\left(t_{k} u_{k}\right)-\frac{1}{2}\langle J^{\prime}\left(t_ku_{k}\right), t_ku_{k}\rangle\\= &\frac{t^2_k}{2} \int_{V}u_{k}^{2}\,d \mu\\=&
t_{k}^{2}\left(J\left(u_{k}\right)+o_{k}(1)\left\|u_{k}\right\|_{\mathcal{H}_s}\right)\\=&t_{k}^{2} J\left(u_{k}\right)+o_{k}(1)\\\rightarrow& c_0,\quad k\rightarrow+\infty,
\end{aligned}
$$
which implies that $c\leq c_0$.   
\end{proof}

\
\

{\bf Proof of Theorem \ref{t1}:} By Lemma \ref{lh} and Lemma \ref{ld}, there exists a $(PS)_c$ sequence $\langle u_{k}\rangle \subset \mathcal{H}_s$ for $J$,
$$J(u_k)\rightarrow c,\quad \text{and}\quad J'(u_k)\rightarrow 0,\quad k\rightarrow+\infty. $$
Similar to (\ref{lx}), one gets that $\{u_{k}\}$ is bounded in $\mathcal{H}_s$ and \begin{equation}\label{66}
 u_{k} \nrightarrow 0,\quad  \text {in } \ell^{2}(V).   
\end{equation}

We claim that there exists $\delta>0$ such that
\begin{equation}\label{hw}
\liminf _{k\rightarrow\infty}\left\|u_k\right\|_{\infty} \geq \delta>0.
\end{equation}
Otherwise $\left\|u_k\right\|_{\infty}\rightarrow 0$ as $k\rightarrow+\infty$.
Then by Lions Lemma \ref{li}, we get that $u_k \rightarrow 0$ in $\ell^{p}(V)$ for $p>2$. Thus we have $\int_{V} B^{\prime}\left(u_{k}\right) u_{k}\,d \mu\rightarrow 0.$ Combined with (\ref{jj}), we get that
$$
\begin{aligned}
0=&\lim\limits_{k\rightarrow+\infty} \left[\langle J'(u_k),u_k\rangle + \int_{V} B^{\prime}\left(u_{k}\right) u_{k}\,d \mu\right]\\=& \lim\limits_{k\rightarrow+\infty}\left[\|u_k\|^2_{\mathcal{H}_s}+\int_{V} A^{\prime}\left(u_{k}\right) u_{k}\,d \mu\right].
\end{aligned}
$$
Since $A'(t)t\geq 0$ with $t\in\mathbb{R}$, we get that
$\lim\limits_{k\rightarrow+\infty}\|u_k\|_{\mathcal{H}_s}=0,$ and hence $\lim\limits_{k\rightarrow+\infty}\|u_k\|_{2}=0$, which contradicts $(\ref{66})$. The claim (\ref{hw}) is completed. 

It follows from (\ref{hw}) that there exists a sequence $\left\{y_k\right\} \subset V$ such that $\left|u_k(y_k)\right| \geq \frac{\delta}{2}.$
Let $$v_k(x)=u_k(x+y_k).$$ 
Since $h$ is $\mathbb{Z}^d$-periodic, $\mathcal{N}$ and $J$ and are invariant under translation, we obtain that $\left\{v_k\right\}$ is a bounded $(PS)_c$ sequence for $J$,
$$J\left(v_k\right) \rightarrow c,\qquad\text{and}\qquad J^{\prime}\left(v_k\right) \rightarrow 0, \quad k\rightarrow+\infty.$$
Thus there exists $v\in \mathcal{H}_s$ such that 
\begin{equation}\label{38}
 v_k \rightharpoonup v,\quad\text{in~}\mathcal{H}_s, \qquad\text{and}\qquad v_k\rightarrow v,\quad \text{pointwise~in~}V.   
\end{equation}
Clearly, $|v(0)|\geq \frac{\delta}{2}>0$, and hence $v\neq 0$. We prove that $v$ is a nontrivial critical point of $J$. In fact, since $J_2(v)=\int_{V}A(v)\,d\mu$ is lower semicontinuous and convex, it is also weakly lower semicontinuous. Hence $J_2(v)<+\infty$ and $v\in D(J)$. For any $\phi\in C_c(V)$, by (\ref{38}), we have that
$$\lim\limits_{k\rightarrow+\infty}\int_{V}(\nabla^s(v_k-v)\nabla^s \phi+(h(x)+1)(v_k-v)\phi)\,d \mu=0,$$
and hence
$$
\begin{aligned}
&\lim\limits_{k\rightarrow+\infty}\langle J'(v_k)-J'(v),\phi\rangle\\=&\lim\limits_{k\rightarrow+\infty}\left[\int_{V}(\nabla^s (v_k-v)\nabla^s \phi+(h(x)+1)(v_k-v)\phi)\,d \mu-\int_V(v_k-v)\phi\,d\mu-\int_{V}(v_k-v)\phi\log v_k^2\,d\mu\right]\\=&0.
\end{aligned}
$$
This means that for any $\phi\in C_c(V)$, $\langle J'(v),\phi\rangle=0$. Since $C_c(V)$ is dense in $D(J)$, we get that $v\in\mathcal{N}$. Finally, we prove that $J(v)=c$. Indeed, one has that
$$
\begin{aligned}
c \leq &J(v)\\=&J(v)-\frac{1}{2}\left\langle J^{\prime}(v), v\right\rangle\\=&\frac{1}{2}\int_{V}v^{2}\,d \mu \\
\leq & \frac{1}{2}\liminf _{k\rightarrow+\infty} \int_{V}v_k^{2}\,d \mu\\
\leq & \frac{1}{2}\limsup_{k\rightarrow+\infty} \int_{V}v_k^{2}\,d \mu\\=& \liminf _{k\rightarrow+\infty} \left(J(v_k)-\frac{1}{2}\langle J'(v_k),v_k)\right)\\=&c.
\end{aligned}
$$
Then $J(v)=c,$ and hence $v$ is a ground state solution to the equation (\ref{0.2}).

\qed

\section{Proof of Theorem \ref{t2}}
In this section, under the assumptions $(h_1)$ and $(h_3)$, we prove the existence of ground state sign-changing solutions to the equation (\ref{0.2}) by the Nehari manifold method and Miranda's theorem. We start our analysis with a few auxiliary lemmas that useful in our theorems.

\begin{lm}\label{l3}
 Let $u \in \mathcal{M}$. For $(\alpha,\beta) \in(0, +\infty) \times(0, +\infty)$ with $(\alpha,\beta) \neq(1,1)$, we have
$$
J(u) > J\left(\alpha u^{+}+\beta u^{-}\right).
$$

\end{lm}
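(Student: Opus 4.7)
The plan is to reduce the two-variable inequality to a closed-form scalar analysis via Corollary \ref{co} and the Nehari identity. First I would apply Corollary \ref{co}(i) to split
$$J(\alpha u^{+}+\beta u^{-}) = J(\alpha u^{+}) + J(\beta u^{-}) - \frac{\alpha\beta}{2}K(u),$$
and expand each summand using $(\alpha t)^{2}\log(\alpha t)^{2} = \alpha^{2}t^{2}\log\alpha^{2}+\alpha^{2}t^{2}\log t^{2}$ to obtain
$$J(\alpha u^{\pm}) = \frac{\alpha^{2}}{2}\|u^{\pm}\|_{\mathcal{H}_{s}}^{2} - \frac{\alpha^{2}}{2}\int_{V}(u^{\pm})^{2}\log(u^{\pm})^{2}\,d\mu - \frac{\alpha^{2}\log\alpha^{2}}{2}\|u^{\pm}\|_{2}^{2}.$$

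Next I would exploit the membership $u\in\mathcal{M}$. The relation $\langle J'(u),u^{+}\rangle = 0$ combined with Corollary \ref{co}(ii) at $\alpha=\beta=1$ yields $\langle J'(u^{+}),u^{+}\rangle = \tfrac{1}{2}K(u)$, which rewrites as
$$\|u^{+}\|_{\mathcal{H}_{s}}^{2} = \|u^{+}\|_{2}^{2} + \int_{V}(u^{+})^{2}\log(u^{+})^{2}\,d\mu + \tfrac{1}{2}K(u),$$
and symmetrically for $u^{-}$. Substituting these two identities back into the expansion causes the unwieldy $\int(u^{\pm})^{2}\log(u^{\pm})^{2}\,d\mu$ terms to cancel exactly, and after collecting the $K(u)$ contributions using $\alpha^{2}+\beta^{2}-2\alpha\beta=(\alpha-\beta)^{2}$, I expect to arrive at the clean formula
$$J(\alpha u^{+}+\beta u^{-}) = \frac{\alpha^{2}(1-\log\alpha^{2})}{2}\|u^{+}\|_{2}^{2} + \frac{\beta^{2}(1-\log\beta^{2})}{2}\|u^{-}\|_{2}^{2} + \frac{(\alpha-\beta)^{2}}{4}K(u).$$
Setting $\alpha=\beta=1$ recovers $J(u)=\tfrac{1}{2}\|u\|_{2}^{2}$, which is a useful consistency check.

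The final step is to analyze the one-variable function $f(t)=t^{2}(1-\log t^{2})$ on $(0,+\infty)$. A direct differentiation gives $f'(t)=-2t\log t^{2}$, so $f$ is strictly increasing on $(0,1)$, strictly decreasing on $(1,+\infty)$, and has a unique strict maximum $f(1)=1$. Since Proposition \ref{o}(i) gives $K(u)\le 0$ and $(\alpha-\beta)^{2}\ge 0$, the last term in the closed-form expression is nonpositive. Therefore
$$J(\alpha u^{+}+\beta u^{-}) \le \tfrac{1}{2}\|u^{+}\|_{2}^{2} + \tfrac{1}{2}\|u^{-}\|_{2}^{2} = J(u),$$
and equality in the first two contributions forces $\alpha=1$ and $\beta=1$, at which point the cross term also vanishes. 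This gives the strict inequality whenever $(\alpha,\beta)\ne(1,1)$.

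The main obstacle is the algebraic step in the second paragraph: getting the right substitution so that the $\int(u^{\pm})^{2}\log(u^{\pm})^{2}\,d\mu$ pieces cancel and everything collapses to a formula depending only on $\|u^{\pm}\|_{2}^{2}$ and $K(u)$. Without this cancellation one is left comparing logarithmic integrals directly, which is the typical source of difficulty for logarithmic Nehari-type arguments; once the closed form is in hand the rest is elementary scalar calculus together with the sign $K(u)\le 0$.
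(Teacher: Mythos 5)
Your proposal is correct and follows essentially the same route as the paper: both use Corollary \ref{co} together with the Nehari identities $\langle J'(u),u^{\pm}\rangle=0$ to reduce everything to the scalar function $t\mapsto t^{2}(1-\log t^{2})$ (the paper writes it as $(1-t^{2})+t^{2}\log t^{2}=1-t^{2}(1-\log t^{2})>0$) plus the nonpositive term $\frac{(\alpha-\beta)^{2}}{4}K(u)$. Your closed-form expression for $J(\alpha u^{+}+\beta u^{-})$ is exactly the paper's final identity rearranged, so the arguments coincide.
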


\begin{proof}
Let 
$$f(x)=\left(1-x^{2}\right)+x^{2} \log x^{2}, \quad x \in(0,+\infty).$$ 
Then $f^{\prime}(x)=$ $2x\log x^{2}$. One gets easily that for $x \in(0,1)$, $f^{\prime}(x)<0$, and for $x \in(1,+\infty), f^{\prime}(x)>0$. Thus for $x \in(0,1) \cup(1,+\infty)$, 
\begin{equation}\label{26}
f(x)=\left(1-x^{2}\right)+x^{2} \log x^{2}>0.
\end{equation}

Let $u \in \mathcal{M}$ and $\alpha,\beta>0$. Note that $K(u)<0$, by Corollary \ref{co} and (\ref{26}), we obtain that
$$
\begin{aligned}
& J(u)-J\left(\alpha u^{+}+\beta u^{-}\right) \\
= & \frac{1}{2}\left(\|u\|_{\mathcal{H}_s}^{2}-\left\|\alpha u^{+}+\beta u^{-}\right\|_{\mathcal{H}_s}^{2}\right)-\frac{1}{2} \int_{V}\left[u^2 \log u^{2}-\left(\alpha u^{+}+\beta u^{-}\right)^{2} \log \left(\alpha u^{+}+\beta u^{-}\right)^{2}\right] \,d\mu.
\\
= & \frac{1-\alpha^2}{2}\left\|u^{+}\right\|_{\mathcal{H}_s}^{2}+\frac{1-\beta^2}{2}\left\|u^{-}\right\|_{\mathcal{H}_s}^{2}-\frac{(1-\alpha\beta)}{2}K(u)\\
& -\frac{1}{2} \int_{V}\left[\left(u^{+}\right)^{2} \log \left(u^{+}\right)^{2}-\left(\alpha u^{+}\right)^{2} \log \left(u^{+}\right)^{2}-\left(\alpha u^{+}\right)^{2} \log \alpha^2\right] \,d\mu\\
& -\frac{1}{2} \int_{V}\left[\left(u^{-}\right)^{2} \log \left(u^{-}\right)^{2}-\left(\beta u^{-}\right)^{2} \log \left(u^{-}\right)^{2}-\left(\beta u^{-}\right)^{2} \log \beta^2\right] \,d\mu\\
= & \frac{1-\alpha^2}{2}\left\langle J^{\prime}(u), u^{+}\right\rangle+\frac{1-\beta^2}{2}\left\langle J^{\prime}(u), u^{-}\right\rangle-\frac{1}{2}\left((1-\alpha\beta)-\frac{1-\alpha^2}{2}-\frac{1-\beta^2}{2}\right)K(u)\\
& +\frac{\left(1-\alpha^2\right)+\alpha^2 \log \alpha^2}{2} \int_{V}\left(u^{+}\right)^{2} \,d\mu+\frac{\left(1-\beta^2\right)+ \beta^2 \log \beta^2}{2} \int_{V}\left(u^{-}\right)^{2} \,d\mu
\\
= &-\frac{(\alpha-\beta)^2}{4} K(u)+\frac{\left(1-\alpha^2\right)+\alpha^2 \log \alpha^2}{2} \int_{V}\left(u^{+}\right)^{2} \,d\mu+\frac{\left(1-\beta^2\right)+ \beta^2 \log \beta^2}{2} \int_{V}\left(u^{-}\right)^{2} \,d\mu\\>&0.
\end{aligned}
$$
\end{proof}

\begin{lm}\label{4}
Let $u \in D(J)$ with $u^{ \pm} \neq 0$. Then there exists a unique positive number pair $\left(\alpha_{u}, \beta_{u}\right)$ such that $\alpha_{u} u^{+}+\beta_{u} u^{-} \in \mathcal{M}$.
\end{lm}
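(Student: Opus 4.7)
The plan is to characterize the membership $\alpha u^+ + \beta u^- \in \mathcal{M}$ as a common zero of the continuous map $G = (g_1, g_2)\colon (0, \infty)^2 \to \mathbb{R}^2$ given by
\[
g_1(\alpha, \beta) := \langle J'(\alpha u^+ + \beta u^-), \alpha u^+\rangle, \qquad g_2(\alpha, \beta) := \langle J'(\alpha u^+ + \beta u^-), \beta u^-\rangle,
\]
and then to apply Miranda's theorem to produce such a zero. By Corollary \ref{co}(ii)--(iii),
\[
g_1(\alpha, \beta) = \alpha^2\|u^+\|_{\mathcal{H}_s}^2 - \alpha^2\|u^+\|_2^2 - \alpha^2\!\int_V (u^+)^2\log(u^+)^2\,d\mu - \alpha^2\log\alpha^2\cdot\|u^+\|_2^2 - \tfrac{\alpha\beta}{2}K(u),
\]
with a symmetric formula for $g_2$. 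The key structural fact is that $K(u) < 0$ strictly: since $u^\pm \neq 0$ and $w_s(x,y) > 0$ for every $x \neq y$ by \eqref{28}, there is a pair $(x_0, y_0)$ with $u^+(x_0) u^-(y_0) < 0$ contributing to the sum defining $K(u)$.

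For existence, the plan is to exhibit a square $[a, b]^2 \subset (0, \infty)^2$, with $a$ small and $b$ large, on whose four sides the signs of $g_1, g_2$ satisfy Miranda's hypotheses. For $\alpha = a$ and $\beta \in [a, b]$, the positive contribution $-\alpha^2\log\alpha^2\,\|u^+\|_2^2 = \Theta(a^2|\log a|)$ dominates the $O(a^2)$ residual (and the coupling $-\frac{\alpha\beta}{2}K(u) \geq 0$ only helps), giving $g_1(a, \beta) > 0$. For $\alpha = b$ and $\beta \in [a, b]$, the leading negative term $-b^2\log b^2\,\|u^+\|_2^2 \to -\infty$ overwhelms both the $O(b^2)$ quadratic remainder and the coupling $-\frac{b\beta}{2}K(u) = O(b^2)$, so $g_1(b, \beta) < 0$. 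The analogous sign conditions for $g_2$ on the faces $\beta = a$ and $\beta = b$ follow by symmetry. Miranda's theorem then yields $(\alpha_u, \beta_u) \in (a, b) \times (a, b)$ with $G(\alpha_u, \beta_u) = 0$, i.e.\ $\alpha_u u^+ + \beta_u u^- \in \mathcal{M}$.

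For uniqueness, the plan is to invoke Lemma \ref{l3}. Suppose $v_i := \alpha_i u^+ + \beta_i u^- \in \mathcal{M}$ for $i = 1, 2$ with $(\alpha_1, \beta_1) \neq (\alpha_2, \beta_2)$. Since $\alpha_i, \beta_i > 0$, one has $v_1^+ = \alpha_1 u^+$ and $v_1^- = \beta_1 u^-$, and hence $v_2 = (\alpha_2/\alpha_1)\,v_1^+ + (\beta_2/\beta_1)\,v_1^-$ with scaling ratio different from $(1, 1)$. Lemma \ref{l3} applied to $v_1 \in \mathcal{M}$ gives $J(v_1) > J(v_2)$; reversing the roles of $v_1$ and $v_2$ gives $J(v_2) > J(v_1)$, a contradiction.

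The main obstacle I anticipate is the uniform verification of Miranda's four boundary sign conditions, since $g_1$ couples to $\beta$ through the $K(u)$-term and the dominant balance among quadratic, logarithmic, and coupling terms shifts between the $\alpha \to 0^+$ and $\alpha \to +\infty$ regimes. The essential structural input is the strict negativity $K(u) < 0$, which on the lattice $\mathbb{Z}^d$ follows from the positivity lower bound in \eqref{28}; this is what makes the small-boundary estimate robust as $\beta$ ranges down to $a$, and it is where the discrete geometry enters the argument crucially.
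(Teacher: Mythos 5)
Your proposal is correct and follows essentially the same route as the paper: both expand the two pairings via Corollary \ref{co}, verify Miranda's sign conditions on the boundary of a square $[a,b]^2$ (the paper first fixes signs on the diagonal $\alpha=\beta$ and then propagates them along the faces using that $-\frac{\alpha\beta}{2}K(u)\ge 0$ is monotone in the off-variable, while you estimate the faces directly --- a cosmetic difference), and deduce uniqueness from Lemma \ref{l3}. Your uniqueness step is in fact slightly more careful than the paper's, since it explicitly covers the case where only one of the two scaling ratios differs from $1$.
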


\begin{proof}
    
For $\alpha,\beta>0$, let
\begin{eqnarray*}
f(\alpha,\beta) = \left\langle J^{\prime}\left(\alpha u^{+}+\beta u^{-}\right),\alpha u^{+}\right\rangle,\qquad g(\alpha,\beta) =\left\langle J^{\prime}\left(\alpha u^{+}+\beta u^{-}\right),\beta u^{-}\right\rangle.  
\end{eqnarray*}
By Corollary \ref{co}, we get respectively that
\begin{equation}\label{2.7}
\begin{aligned}
f(\alpha,\beta)=&\langle J^{\prime}(\alpha u^{+}+\beta u^{-}),\alpha u^{+}\rangle\\=&(J^{\prime}\left(\alpha u^{+}\right),\alpha u^{+})-\frac{\alpha\beta}{2}K(u)\\=& \alpha^2\int_{V}\left(|\nabla^s u^+|^2 +h(x)(u^{+})^2\right)\,d \mu-\int_{V}\left(\alpha u^{+}\right)^{2} \log \left(\alpha u^{+}\right)^{2}\,d\mu-\frac{\alpha\beta}{2}K(u),
\end{aligned}
\end{equation}
and
\begin{equation}\label{2.8}
\begin{aligned}
g(\alpha,\beta) =&\langle J^{\prime}(\alpha u^{+}+\beta u^{-}),\beta u^{-}\rangle\\=&(J^{\prime}\left(\beta u^{-}\right),\beta u^{-})-\frac{\alpha\beta}{2}K(u)\\=& \beta^2\int_{V}\left(|\nabla^s u^{-}|^2 +h(x)(u^{-})^2\right)\,d \mu-\int_{V}\left(\beta u^{-}\right)^{2} \log \left(\beta u^{-}\right)^{2}\,d\mu-\frac{\alpha\beta}{2}K(u).
\end{aligned}
\end{equation}
Let $\beta=\alpha$ in (\ref{2.7}) and (\ref{2.8}), then
$$
\begin{aligned}
f(\alpha,\alpha) =& \alpha^2\int_{V}\left(|\nabla^s u^+|^2 +h(x)(u^{+})^2\right)\,d \mu-\alpha^2\log \alpha^2\int_{V}\left(u^{+}\right)^{2}\,d\mu-\frac{\alpha^2}{2}K(u)-\alpha^2\int_{V}\left( u^{+}\right)^{2} \log \left( u^{+}\right)^{2}\,d\mu,
\end{aligned}
$$
and
$$
\begin{aligned}
g(\alpha,\alpha) =& \alpha^2\int_{V}\left(|\nabla^s u^{-}|^2 +h(x)(u^{-})^2\right)\,d \mu-\alpha^2\log \alpha^2\int_{V}\left(u^{-}\right)^{2}\,d\mu-\frac{\alpha^2}{2}K(u)-\alpha^2\int_{V}\left( u^{-}\right)^{2} \log \left( u^{-}\right)^{2}\,d\mu.
\end{aligned}
$$
Note that $K(u)< 0$. Hence for $\alpha>0$ sufficiently small, we get that $f(\alpha,\alpha)>0$ and $g(\alpha,\alpha)>0$. For $\alpha>0$ sufficiently large, we have that $f(\alpha,\alpha)<0$ and $g(\alpha,\alpha)<0$. As a consequence, there exist $r$ and $R$ with $0<r<R$ such that
\begin{equation}\label{0.5}
f(r, r)>0,\quad g(r, r)>0,\quad\text{and}\quad f(R, R)<0,\quad g(R, R)<0.
\end{equation}
Then for $\beta\in[r, R],$ it follows from (\ref{2.7}) and (\ref{0.5}) that
$$
\begin{aligned}
&f(r, \beta)\geq f(r,r)>0, \quad\text{and}\quad f(R, \beta)\leq f(R,R)<0.
\end{aligned}
$$
For $\alpha\in[r, R],$ by (\ref{2.8}) and (\ref{0.5}), we get that
$$
\begin{aligned}
& g(\alpha, r)\geq g(r,r)>0,\quad \text{and}\quad g(\alpha, R)\leq g(R,R)<0.
\end{aligned}
$$
By the Miranda's theorem \cite{K}, there exist $\alpha_{u}, \beta_{u} \in(r, R)$ such that $f\left(\alpha_{u}, \beta_{u}\right)=g\left(\alpha_{u}, \beta_{u}\right)=0$, which implies that $\alpha_{u} u^{+}+\beta_{u} u^{-} \in \mathcal{M}$.

At last, we prove the uniqueness of the positive number pair $\left(\alpha_{u}, \beta_{u}\right)$. By contradiction, suppose there exists $\left(\alpha_i, \beta_{i}\right)$ such that $\alpha_{i} u^{+}+\beta_{i} u^{-} \in \mathcal{M}$ for $i=1, 2$, where $\alpha_1\neq \alpha_2$ and $\beta_1\neq \beta_2$. 

Let $\alpha=\frac{\alpha_2}{\alpha_1}\neq1$ and $\beta=\frac{\beta_2}{\beta_1}\neq1$. By Lemma \ref{l3}, we get that
\begin{equation*}\label{3.1}
J(\alpha_2u^++\beta_2u^-) =J \left(\alpha(\alpha_1u^+)+\beta(\beta_1u^-)\right)<J (\alpha_1u^++\beta_1u^-),
\end{equation*}
and
\begin{equation*}\label{3.2}
  J(\alpha_1u^++\beta_1u^-) =J \left(\frac{1}{\alpha}(\alpha_2u^+)+\frac{1}{\beta}(\beta_2u^-)\right)<J (\alpha_2u^++\beta_2u^-).  
\end{equation*}
This is a contradiction. The proof is completed.

\end{proof}

\begin{lm}\label{l5}
Let $u \in D(J)$ with $u^{ \pm} \neq 0$ such that $\langle J^{\prime}(u),u^{ \pm}\rangle \leq 0$. Then the unique pair $\left(\alpha_{u}, \beta_{u}\right)$ obtained in Lemma \ref{4} satisfies $\alpha_{u}, \beta_{u} \in(0,1]$. In particular, the $"="$ holds if and only if $\alpha_u=\beta_u=1.$
\end{lm}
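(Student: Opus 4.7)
The plan is to leverage the defining equations $f(\alpha_u, \beta_u) = 0$ and $g(\alpha_u, \beta_u) = 0$ of the pair $(\alpha_u, \beta_u)$ obtained in Lemma \ref{4}, and compare them against $f(1,1) = \langle J'(u), u^+\rangle$ and $g(1,1) = \langle J'(u), u^-\rangle$, both of which are $\leq 0$ by hypothesis. First, I take (\ref{2.7}) at $(\alpha, \beta) = (\alpha_u, \beta_u)$, divide by $\alpha_u^2$, and expand $\log(\alpha_u u^+)^2 = \log \alpha_u^2 + \log(u^+)^2$, obtaining
\begin{equation*}
\int_V \bigl(|\nabla^s u^+|^2 + h(x)(u^+)^2\bigr)\,d\mu - \int_V (u^+)^2 \log(u^+)^2\,d\mu = \log \alpha_u^2 \int_V (u^+)^2\,d\mu + \frac{\beta_u}{2\alpha_u} K(u).
\end{equation*}
The same formula at $(\alpha, \beta) = (1,1)$ combined with $\langle J'(u), u^+\rangle \leq 0$ reads
\begin{equation*}
\int_V \bigl(|\nabla^s u^+|^2 + h(x)(u^+)^2\bigr)\,d\mu - \int_V (u^+)^2 \log(u^+)^2\,d\mu \leq \tfrac{1}{2} K(u).
\end{equation*}
Subtracting yields the key inequality
\begin{equation*}
\log \alpha_u^2 \int_V (u^+)^2\,d\mu \leq \frac{K(u)(\alpha_u - \beta_u)}{2\alpha_u},
\end{equation*}
and an analogous manipulation of (\ref{2.8}) gives
\begin{equation*}
\log \beta_u^2 \int_V (u^-)^2\,d\mu \leq \frac{K(u)(\beta_u - \alpha_u)}{2\beta_u}.
\end{equation*}

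Next, I would verify that $K(u) < 0$ strictly, not merely $\leq 0$: since $u^\pm \neq 0$, there exist $x_1, x_2 \in V$ with $u^+(x_1) > 0$ and $u^-(x_2) < 0$, so the summand $w_s(x_1, x_2) u^+(x_1) u^-(x_2)$ contributes a strictly negative amount while every other term in $K(u)$ is nonpositive. Now assume without loss of generality that $\alpha_u \geq \beta_u$. Then $\alpha_u - \beta_u \geq 0$ combined with $K(u) < 0$ makes the right-hand side of the first inequality nonpositive, forcing $\log \alpha_u^2 \leq 0$, i.e., $\alpha_u \leq 1$. Hence $\beta_u \leq \alpha_u \leq 1$, so both lie in $(0,1]$; the case $\beta_u \geq \alpha_u$ is handled symmetrically via the second inequality.

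For the equality statement, suppose $\alpha_u = 1$ still under $\alpha_u \geq \beta_u$; the derived inequality reduces to $0 \leq \frac{K(u)(1 - \beta_u)}{2}$, and strict negativity of $K(u)$ forces $1 - \beta_u \leq 0$, so $\beta_u = 1$. The converse is immediate: $\alpha_u = \beta_u = 1$ means $u \in \mathcal{M}$, whereupon both hypothesis inequalities hold with equality. I expect the main obstacle to be careful sign bookkeeping through the division by $\alpha_u^2$ and the subtraction, together with ensuring the strict negativity of $K(u)$ rather than just the $K(u) \leq 0$ of Proposition \ref{o}, so that the equality case closes tightly.
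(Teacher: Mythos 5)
Your proof is correct and follows essentially the same route as the paper: compare the Nehari identity $f(\alpha_u,\beta_u)=0$ (normalized by $\alpha_u^2$) against $\langle J'(u),u^+\rangle\le 0$, use the strict negativity of $K(u)$ together with the WLOG ordering $\beta_u\le\alpha_u$ to force $\log\alpha_u^2\le 0$. You in fact go slightly further than the paper, which asserts $K(u)<0$ without justification and never addresses the equality clause; your verification of both is sound.
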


\begin{proof}
By Lemma \ref{4}, there exists a unique positive number pair $(\alpha_u,\beta_u)$ such that $\alpha_uu^++\beta_uu^-\in \mathcal{M}$. Without loss of generality, we assume that $0<\beta_{u} \leq \alpha_{u}$. By Corollary \ref{co}, we get that
\begin{equation}\label{3.4}
\begin{aligned}
&\langle J^{\prime}(\alpha_uu^++\beta_uu^-),\alpha_uu^{+}\rangle\\ &=\alpha_u^2\int_{V}\left(|\nabla^s u^+|^2 +h(x)(u^{+})^2\right)\,d \mu-\int_{V}\left(\alpha_u u^{+}\right)^{2} \log \left(\alpha_u u^{+}\right)^{2}\,d\mu-\frac{\alpha_u\beta_u}{2}K(u).
\end{aligned}
\end{equation}
Since $\alpha_{u} u^{+}+\beta_{u} u^{-} \in \mathcal{M}$ and $K(u)<0$, we get that
\begin{equation}\label{3.3}
\begin{aligned}
\int_{V}\left(\alpha_u u^{+}\right)^{2} \log \left(\alpha_u u^{+}\right)^{2}\,d\mu=&\alpha_u^2\int_{V}\left(|\nabla^s u^+|^2 +h(x)(u^{+})^2\right)\,d \mu-\frac{\alpha_u\beta_u}{2}K(u)\\ \leq &\alpha_u^2\int_{V}\left(|\nabla^s u^+|^2 +h(x)(u^{+})^2\right)\,d \mu-\frac{\alpha^2_u}{2}K(u).
\end{aligned}    
\end{equation}

Let $\alpha_u=\beta_u=1$ in (\ref{3.4}), we have that
$$
\begin{aligned}
\langle J^{\prime}(u),u^{+}\rangle=\int_{V}\left(|\nabla^s u^+|^2 +h(x)(u^{+})^2\right)\,d \mu-\int_{V}\left(u^{+}\right)^{2} \log \left(u^{+}\right)^{2}\,d\mu-\frac{1}{2}K(u).
\end{aligned}
$$
Then it follows from the fact $\langle J^{\prime}(u),u^{+}\rangle\leq 0$ that
$$
\begin{aligned}
\int_{V}\left(u^{+}\right)^{2} \log \left(u^{+}\right)^{2}\,d\mu\geq \int_{V}\left(|\nabla^s u^+|^2 +h(x)(u^{+})^2\right)\,d \mu-\frac{1}{2}K(u).
\end{aligned}
$$
Multiplying the above inequality by $-\alpha_u^2$, then
\begin{equation}\label{3.5}
 \begin{aligned}
-\alpha_u^2\int_{V}\left(u^{+}\right)^{2} \log \left(u^{+}\right)^{2}\,d\mu\leq -\alpha_u^2\int_{V}\left(|\nabla^s u^+|^2 +h(x)(u^{+})^2\right)\,d \mu+\frac{\alpha_u^2}{2}K(u).
\end{aligned}   
\end{equation}
By (\ref{3.3}) and (\ref{3.5}), we get that
$$
\begin{aligned}
\alpha_u^2\log \alpha_u^{2}\int_{V}\left(u^{+}\right)^{2}\,d\mu\leq 0.
\end{aligned} 
$$
This implies that $0<\alpha_u\leq 1,$ and hence $0<\beta_u\leq\alpha_u\leq 1.$
\end{proof}

Now we prove that the minimizer of $J$ on $\mathcal{M}$ can be achieved.

\begin{lm}\label{l9}
Let $(h_1)$ and $(h_3)$ hold. Then $m>0$ is achieved.
\end{lm}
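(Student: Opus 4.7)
My plan has four beats: establish $m > 0$; extract a weak limit $u$ from a minimizing sequence $\{u_k\} \subset \mathcal{M}$ with $J(u_k) \to m$ via Lemma~\ref{l2}; verify $u^\pm \neq 0$; then combine Lemmas~\ref{4} and~\ref{l5} to identify $u$ as a minimizer.

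Positivity of $m$ is immediate from $\mathcal{M} \subset \mathcal{N}$ (every $u \in \mathcal{M}$ satisfies $\langle J'(u), u\rangle = \langle J'(u), u^+\rangle + \langle J'(u), u^-\rangle = 0$): indeed, Lemma~\ref{lg} identifies $J(u)$ with $\max_{t>0}J(tu)$, and applying Lemma~\ref{lh}(i) at $t = \rho/\|u\|_{\mathcal{H}_s}$ gives $J(u) \geq \alpha > 0$, whence $m \geq c \geq \alpha > 0$. For the minimizing sequence $\{u_k\}$, the identity $\langle J'(u_k), u_k\rangle = 0$ reduces $J(u_k)$ to $\tfrac12\|u_k\|_2^2$ (so $\|u_k\|_2$ is bounded) and rearranges to $\|u_k\|_{\mathcal{H}_s}^2 = \|u_k\|_2^2 + \int u_k^2 \log u_k^2\, d\mu$; the elementary pointwise inequality $t^2\log t^2 \leq C_\epsilon t^{2+\epsilon}$ ($t \geq 0$, $\epsilon > 0$ fixed) together with $\|u_k\|_{2+\epsilon}\leq\|u_k\|_2$ bounds the right-hand side, so $\{u_k\}$ is bounded in $\mathcal{H}_s$. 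Lemma~\ref{l2} then yields $u \in \mathcal{H}_s$ with $u_k \rightharpoonup u$ in $\mathcal{H}_s$, $u_k \to u$ pointwise and in $\ell^q(V)$ for all $q \in [2,\infty]$; the same holds for $u_k^\pm \to u^\pm$.

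To see $u^\pm \neq 0$, I apply the mountain-pass argument to $v_k := u_k^\pm$. Proposition~\ref{o}(ii) together with the $\mathcal{M}$-identity yields $\langle J'(v_k), v_k\rangle = \tfrac12 K(u_k) \leq 0$, so the unique $t_k > 0$ with $t_kv_k \in \mathcal{N}$ (Lemma~\ref{lg}) satisfies $t_k \leq 1$; the mountain-pass estimate $\alpha \leq J((\rho/\|v_k\|_{\mathcal{H}_s})v_k) \leq J(t_kv_k) = \tfrac{t_k^2}{2}\|v_k\|_2^2$ then forces $\|u_k^\pm\|_2^2 \geq 2\alpha$, and strong $\ell^2$-convergence gives $\|u^\pm\|_2 \geq \sqrt{2\alpha} > 0$. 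A Fatou argument places $u$ in $D(J)$, using the uniform $\ell^1$-bound on $u_k^2|\log u_k^2|$ (which comes from the bound on $\int u_k^2\log u_k^2\,d\mu$ and the positive-piece control $\int_{\{|u_k|\geq 1\}}u_k^2\log u_k^2\,d\mu \leq C\|u_k\|_q^q$ for some $q > 2$). Lemma~\ref{4} then gives $\alpha_u,\beta_u > 0$ with $\tilde u := \alpha_u u^+ + \beta_u u^- \in \mathcal{M}$. Passing to the $\liminf$ in $\langle J'(u_k), u_k^\pm\rangle = 0$---using weak lower semicontinuity of $\|u^\pm\|_{\mathcal{H}_s}^2$, strong $\ell^2$-convergence of $\|u^\pm\|_2^2$ and of $K(u_k)$ (the latter via the Schur-type bound $\sum_y w_s(x,y) \leq C$), and a one-sided upper limit for the log term---yields $\langle J'(u), u^\pm\rangle \leq 0$, so Lemma~\ref{l5} forces $\alpha_u,\beta_u \in (0,1]$. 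Since $J = \tfrac12\|\cdot\|_2^2$ on $\mathcal{M}$ and $\|u\|_2^2 = 2m$ by strong $\ell^2$-convergence,
\[
m \leq J(\tilde u) = \tfrac12\bigl(\alpha_u^2\|u^+\|_2^2 + \beta_u^2\|u^-\|_2^2\bigr) \leq \tfrac12\|u\|_2^2 = m,
\]
so $\alpha_u = \beta_u = 1$ and $u = \tilde u$ attains the minimum.

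The delicate step, and the main obstacle, is passing to the limit in the logarithmic term, because $t^2|\log t^2|$ admits no pointwise $\ell^1$-dominant near $0$. The workaround is to split $V = \{|u_k^\pm| \geq 1\} \cup \{|u_k^\pm| < 1\}$: on the first set the integrand is bounded by $C(u_k^\pm)^q$ with $q > 2$ and $u_k \to u$ in $\ell^q$, so dominated convergence applies; on the second the integrand is non-positive, so Fatou applied to $-(u_k^\pm)^2\log(u_k^\pm)^2$ there yields the one-sided inequality $\limsup\int(u_k^\pm)^2\log(u_k^\pm)^2\,d\mu \leq \int(u^\pm)^2\log(u^\pm)^2\,d\mu$. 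Combined with weak lower semicontinuity of $\|u^\pm\|_{\mathcal{H}_s}^2$ and strong $\ell^2$-convergence of $\|u^\pm\|_2^2$ and $K(u_k)$, this one-sided estimate is exactly what lets me pass to $\langle J'(u), u^\pm\rangle \leq 0$.
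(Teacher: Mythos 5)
Your proposal is correct and follows essentially the same route as the paper's proof: a minimizing sequence made bounded via the identity $J(u_k)-\tfrac12\langle J'(u_k),u_k\rangle=\tfrac12\|u_k\|_2^2$ together with the bound $\int_V (u_k^2\log u_k^2)^+\,d\mu\le C_q\|u_k\|_2^q$, the compact embedding of Lemma \ref{l2}, a one-sided passage to the limit giving $\langle J'(u),u^{\pm}\rangle\le 0$, and then Lemma \ref{l5} combined with the squeeze $m\le J(\tilde u)\le\tfrac12\|u\|_2^2=m$. The only local deviations are that you establish $m>0$ upfront from $\mathcal{M}\subset\mathcal{N}$ and the mountain-pass level, and you bound $\|u_k^{\pm}\|_2$ away from zero by projecting $u_k^{\pm}$ onto $\mathcal{N}$ with $t_k\le 1$ rather than via the paper's direct norm chain; both variants are sound, and your treatment of the nonvanishing step and of the convergence of $K(u_k)$ is, if anything, more carefully justified than the corresponding steps in the paper.
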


\begin{proof} 
Let $\{u_k\}\subset\mathcal{M}$ be a minimizing sequence satisfying
$$\lim\limits_{k\rightarrow\infty}J(u_k)=m.$$
Since $\{u_k\}\subset\mathcal{M}$, we have that 
$$
\begin{aligned}
\lim _{k \rightarrow\infty} J\left(u_{k}\right) & =\lim _{k \rightarrow\infty}\left[J\left(u_{k}\right)-\frac{1}{2} \langle J^{\prime}\left(u_{k}\right), u_{k}\rangle\right] \\
& =\frac{1}{2}\lim _{k \rightarrow\infty}\int_V |u_k|^2\,d\mu \\
& =m.
\end{aligned}
$$
This means that $\{u_{k}\}$ is bounded in $\ell^2(V)$. By (\ref{ac}), for any $q>2$, there exists $C_{q}>0$ such that
$$
\begin{aligned}
\int_{V} u_{k}^{2} \log u_{k}^{2} d \mu & \leq \int_{V}\left(u_{k}^{2} \log u_{k}^{2}\right)^{+} d \mu\\& \leq C_{q} \int_{V}\left|u_{k}\right|^{q} d \mu\\
& \leq C_{q}\left\|u_{k}\right\|_{2}^{q}.
\end{aligned}
$$
Then we get that 
\begin{equation}\label{90}
\left\|u_{k}\right\|_{\mathcal{H}_s}^{2}=\int_{V} u_{k}^{2} \log u_{k}^{2} d \mu+\left\|u_{k}\right\|_{2}^{2} \leq C_{q}\left\|u_{k}\right\|_{2}^{q}+\left\|u_{k}\right\|_{2}^{2}.
\end{equation}
Hence $\left\{u_{k}\right\}$ is bounded in $\mathcal{H}_{s}$. Then by Lemma \ref{l2}, there exists $u \in \mathcal{H}_{s}$ such that, passing to a subsequence if necessary, 
\begin{equation}\label{91}
\begin{cases}u_{k} \rightharpoonup u, & \text { weakly in } \mathcal{H}_{s}, \\ u_{k} \rightarrow u, & \text { pointwise in } V, \\ u_{k} \rightarrow u, & \text { strongly in } \ell^{q}(V), ~q \in[2,\infty],\end{cases}
\end{equation}
By the fact $u_k^{\pm}\neq 0$ and (\ref{91}), we have that 
$$
\begin{aligned}
0< &C\leq \|u_k^{\pm}\|^2_{\mathcal{H}_s}\\ \leq &\int_V(u_k^{\pm})^2\,d\mu+\int_{V}\left(u_k^{\pm}\right)^{2} |\log \left(u_k^{\pm}\right)^{2}|\,d\mu\\ \leq &\int_V(u_k^{\pm})^2\,d\mu+C_{q} \int_{V}\left|u^{\pm}_{k}\right|^{q} d \mu\\ \leq &\left\|u^{\pm}_{k}\right\|_{2}^{2}+C_{q}\left\|u^{\pm}_{k}\right\|_{2}^{q}\\= & \left\|u^{\pm}\right\|_{2}^{2}+C_{q}\left\|u^{\pm}\right\|_{2}^{q}+o_k(1),
\end{aligned}
$$
which implies that $u^{\pm}\neq 0$, and hence 
$$
\begin{aligned}
m&=\lim _{k \rightarrow\infty} J\left(u_{k}\right)\\ & =\lim _{k \rightarrow\infty}\left[J\left(u_{k}\right)-\frac{1}{2}\langle J^{\prime}\left(u_{k}\right), u_{k}\rangle\right] \\
& =\frac{1}{2}\lim _{k \rightarrow\infty}\int_V |u_k|^2\,d\mu \\
& =\frac{1}{2}\int_V (|u^+|^2+|u^-|^2)\,d\mu\\&>0.
\end{aligned}
$$

Note that
\begin{equation}\label{3.8}
\langle J'(u_k),u_k^{\pm}\rangle=\|u_k^{\pm}\|_{\mathcal{H}_s}^{2}-\frac{1}{2}K(u_k)-\int_{V}(u_k^{\pm})^2\,d\mu-\int_{V}\left(u_k^{\pm}\right)^{2} \log \left(u_k^{\pm}\right)^{2}\,d\mu=0.
\end{equation}
By (\ref{91}), (\ref{3.8}), weak lower semicontinuity of norm, Fatou's lemma and Lebesgue dominated theorem, we get that
$$
\begin{aligned}
&\|u^{\pm}\|_{\mathcal{H}_s}^{2}-\frac{1}{2}K(u)-\int_{V}(u^{\pm})^2\,d\mu-\int_{V}\left((u^{\pm})^{2} \log \left(u^{\pm}\right)^{2}\right)^-\,d\mu \\
\leq & \liminf _{k \rightarrow\infty}\left[\|u_k^{\pm}\|_{\mathcal{H}_s}^{2}-\frac{1}{2}K(u_k)-\int_{V}(u_k^{\pm})^2\,d\mu-\int_{V}\left((u_k^{\pm})^{2} \log \left(u_k^{\pm}\right)^{2}\right)^-\,d\mu\right] \\
= & \liminf _{k \rightarrow\infty} \int_{V}\left((u_k^{\pm})^{2} \log \left(u_k^{\pm}\right)^{2}\right)^+\,d\mu \\
= & \int_{V}\left((u^{\pm})^{2} \log \left(u^{\pm}\right)^{2}\right)^+\,d\mu,
\end{aligned}
$$
and hence
\begin{equation*}
\langle J^{\prime}\left(u\right), u^{\pm}\rangle=\|u^{\pm}\|_{\mathcal{H}_s}^{2}-\frac{1}{2}K(u)-\int_{V}(u^{\pm})^2\,d\mu-\int_{V}|u^{\pm}|^{2} \log \left(u^{\pm}\right)^{2}\,d\mu\leq0.
\end{equation*}
By Lemma \ref{l5}, there exist two constants $\alpha, \beta\in(0,1]$ such that $\tilde{u}=\alpha u^{+}+\beta u^{-} \in \mathcal{M}$. Then
$$
\begin{aligned}
m \leq &J(\widetilde{u})=J\left(\widetilde{u}\right)-\frac{1}{2} \langle J^{\prime}\left(\widetilde{u}\right), \widetilde{u}\rangle\\=&\frac{1}{2}\int_V |\alpha u^{+}+\beta u^{-}|^2\,d\mu\\ \leq& \frac{1}{2}\int_V |u|^2\,d\mu\\ =&
\frac{1}{2}\lim\limits_{k \rightarrow\infty}\int_V |u_k|^2\,d\mu\\ =&
\lim\limits_{k \rightarrow\infty}\left[J\left(u_{k}\right)-\frac{1}{2}\langle J^{\prime}\left(u_{k}\right), u_{k}\rangle\right] \\=& m.
\end{aligned}
$$
This implies that $\alpha=\beta=1$. Hence $u \in \mathcal{M}$ and  $J\left(u\right)=m>0.$

\end{proof}

The following lemma completes the proof of Theorem \ref{t1}.

\begin{lm}\label{l8}
If $u \in \mathcal{M}$ with $J(u)=m$, then $u$ is a sign-changing solution of the equation (\ref{0.2}). 
\end{lm}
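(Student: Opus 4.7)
Since $u \in \mathcal{M}$ forces $u^{+} \ne 0$ and $u^{-} \ne 0$, the sign-changing part is automatic; the content of the lemma is that a minimizer of $J$ on $\mathcal{M}$ must in fact be a critical point of $J$. I argue by contradiction. Suppose $J'(u) \ne 0$ in the Szulkin sense appropriate to the decomposition $J = J_{1}+J_{2}$, with $J_{1} \in C^{1}(\mathcal{H}_{s},\mathbb{R})$ and $J_{2}$ convex lower semicontinuous. Then there is a direction of strict descent at $u$, and I would exploit this to build a small perturbation of $u$ that still belongs to $\mathcal{M}$ but has strictly smaller energy than $m$.

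First I would fix a small square $D_{\tau} = [1-\tau,1+\tau]^{2}$. By Lemma \ref{l3}, $(1,1)$ is the strict global maximum of $(\alpha,\beta) \mapsto J(\alpha u^{+}+\beta u^{-})$ on $(0,+\infty)^{2}$, so for $\tau$ small
$$ \bar m \;:=\; \max_{(\alpha,\beta) \in \partial D_{\tau}} J(\alpha u^{+}+\beta u^{-}) \;<\; m. $$
Replaying the sign analysis in the proof of Lemma \ref{4}, for $\tau$ small the continuous functions $f(\alpha,\beta) = \langle J'(\alpha u^{+}+\beta u^{-}),\alpha u^{+}\rangle$ and $g(\alpha,\beta) = \langle J'(\alpha u^{+}+\beta u^{-}),\beta u^{-}\rangle$ satisfy $f>0$ on the left edge and $f<0$ on the right edge of $\partial D_{\tau}$, with the analogous signs of $g$ on the bottom and top edges; indeed, by the uniqueness in Lemma \ref{4}, $(1,1)$ is the only common zero of $f$ and $g$ inside $D_{\tau}$. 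Next, the Szulkin-type quantitative deformation lemma provides a continuous map $\eta: \mathcal{H}_{s} \to \mathcal{H}_{s}$ that lowers $J$ by a positive amount on a small neighborhood of $u$ and is the identity outside a slightly larger ball; choosing $\tau$ small, $\eta$ equals the identity on the compact set $\{\alpha u^{+}+\beta u^{-} : (\alpha,\beta) \in \partial D_{\tau}\}$.

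Set $\Theta(\alpha,\beta) = \eta(\alpha u^{+}+\beta u^{-})$. Because $\Theta$ agrees with $(\alpha,\beta)\mapsto \alpha u^{+}+\beta u^{-}$ on $\partial D_{\tau}$, the sign configuration underlying Miranda's theorem carries over to
$$ \Xi(\alpha,\beta) \;=\; \bigl(\langle J'(\Theta(\alpha,\beta)), \Theta(\alpha,\beta)^{+}\rangle,\; \langle J'(\Theta(\alpha,\beta)), \Theta(\alpha,\beta)^{-}\rangle\bigr), $$
so there exists $(\alpha_{*},\beta_{*}) \in D_{\tau}$ with $\Theta(\alpha_{*},\beta_{*}) \in \mathcal{M}$. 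Combining the strict descent property of $\eta$ near $(1,1)$ with the boundary estimate $J \le \bar m < m$ on $\partial D_{\tau}$ yields $J(\Theta(\alpha_{*},\beta_{*})) < m$, contradicting $m = \inf_{\mathcal{M}} J$. The main obstacle is the careful bookkeeping for the non-smooth deformation: I need $\eta$ localized tightly enough that $\partial D_{\tau}$ is undisturbed while still producing a uniform energy drop at $u$, and I must verify that $\Theta(\alpha_{*},\beta_{*})$ retains non-trivial positive and negative parts so that it genuinely lies in $\mathcal{M}$, which I would obtain from the continuity of the maps $v\mapsto v^{\pm}$ on $\ell^{2}(V)$ together with $u^{\pm} \ne 0$.
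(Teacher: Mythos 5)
Your overall architecture matches the paper's: assume $u$ is not a critical point, perturb the two-parameter surface $(\alpha,\beta)\mapsto \alpha u^{+}+\beta u^{-}$ near $(1,1)$ so as to strictly lower the energy there while leaving the boundary of a parameter square untouched, check the Miranda sign conditions on that boundary, and land back on $\mathcal{M}$ below level $m$. The boundary sign analysis you sketch is also essentially the one the paper carries out (it uses the square $[\delta,2-\delta]^2$ rather than a small square, but the same computation works on $\partial D_\tau$ since $\alpha\le\beta$ on the left edge and $\alpha\ge\beta$ on the right edge). However, there are two genuine problems with your write-up.

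First, the crux of the proof is the deformation, and you invoke a ``Szulkin-type quantitative deformation lemma'' as a black box. For this functional $J=J_1+J_2$ with $J_2$ only convex and lower semicontinuous, such a quantitative deformation with the precise localization properties you need (uniform energy drop near $u$, identity on a prescribed set, continuity) is not an off-the-shelf result, and the paper deliberately avoids it: it constructs the perturbation explicitly. Namely, from $\langle J'(u),\phi\rangle\le -1$ for some $\phi\in D(J)$ it gets $\langle J'(\alpha u^{+}+\beta u^{-}+\sigma\phi),\phi\rangle\le -\tfrac12$ for $|\alpha-1|+|\beta-1|+|\sigma|\le\varepsilon$, and then works with the concrete family $v=\alpha u^{+}+\beta u^{-}+\varepsilon\eta(\alpha,\beta)\phi$, where $\eta$ is a scalar cutoff in the parameters $(\alpha,\beta)$, estimating $J(v)\le J(\alpha u^{+}+\beta u^{-})-\tfrac12\varepsilon\eta(\alpha,\beta)$ by integrating along $\sigma$. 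This one-dimensional construction is what makes the non-smoothness harmless; your version would need to supply (or cite and verify the hypotheses of) the abstract deformation lemma in the Szulkin framework, which is a real gap as written.

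Second, your quantifier order on $\tau$ is backwards. If the deformation is the identity \emph{outside} a ball around $u$, then shrinking $\tau$ moves the boundary set $\{\alpha u^{+}+\beta u^{-}:(\alpha,\beta)\in\partial D_\tau\}$ \emph{toward} $u$ (its distance to $u$ is comparable to $\tau$), i.e.\ into the region where the deformation acts; so ``choosing $\tau$ small'' does not make $\eta$ the identity on that set. You must instead fix $\tau$ first and then shrink the support of the deformation (or make the deformation the identity below the energy level $\max_{\partial D_\tau}J(\alpha u^{+}+\beta u^{-})<m$). The paper's cutoff in the $(\alpha,\beta)$-variables sidesteps this entirely, since the perturbation vanishes identically for $|\alpha-1|\ge\varepsilon$ or $|\beta-1|\ge\varepsilon$.
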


\begin{proof}
By contradiction, we assume  that $u \in \mathcal{M}$ with $J(u)=m$, but $u$ is not a solution of the equation (\ref{0.2}). Then there exist a function $\phi \in D(J)$ such that
$$
\int_{V}(\nabla^su\nabla^s \phi+h(x) u \phi)\,d \mu-\int_{V}u\phi\log u^2\,d\mu\leq -1.
$$
Since $J$ is a $C^1$ functional on $D(J)$, for some $\varepsilon>0$,
$$
\left\langle J^{\prime}\left(\alpha u^{+}+\beta u^{-}+\sigma \phi\right), \phi\right\rangle\leq-\frac{1}{2},\quad |\alpha-1|+|\beta-1|+|\sigma| \leq \varepsilon.
$$

We consider the functional $J\left(\alpha u^{+}+\beta u^{-}+\varepsilon \eta(\alpha,\beta) \phi\right)$, where $\eta$ is a cutoff function satisfying
$$
\eta(\alpha,\beta)= 
\begin{cases}1, & |\alpha-1| \leq \frac{1}{2} \varepsilon, |\beta-1| \leq \frac{1}{2} \varepsilon, \\ 0, & |\alpha-1| \geq \varepsilon \text { or }|\beta-1| \geq \varepsilon.
\end{cases}
$$
If $|\alpha-1| \leq \varepsilon$ and $|\beta-1| \leq \varepsilon$, we have
$$
\begin{aligned}
J\left(\alpha u^{+}+\beta u^{-}+\varepsilon \eta(\alpha,\beta) \phi\right) &  =J\left(\alpha u^{+}+\beta u^{-}\right)+\int_0^{1} \left\langle J^{\prime}\left(\alpha u^{+}+\beta u^{-}+\sigma \varepsilon \eta(\alpha,\beta) \phi\right),\varepsilon \eta(\alpha,\beta) \phi\right\rangle\,d \sigma \\
& \leq J\left(\alpha u^{+}+\beta u^{-}\right)-\frac{1}{2} \varepsilon \eta(\alpha,\beta).
\end{aligned}
$$
If $|\alpha-1| \geq \varepsilon$ or $|\beta-1| \geq \varepsilon,\,  \eta(\alpha,\beta)=0$, the above estimate is obvious. Since $u \in \mathcal{M}$, for $(\alpha,\beta) \neq(1,1)$, by Lemma \ref{l3}, we have 
$$
J\left(\alpha u^{+}+\beta u^{-}+\varepsilon \eta(\alpha,\beta) \phi\right) \leq J\left(\alpha u^{+}+\beta u^{-}\right)<J(u), \quad (\alpha,\beta) \neq(1,1).
$$
For $(\alpha,\beta)=(1,1)$,
$$
J\left(\alpha u^{+}+\beta u^{-}+\varepsilon \eta(\alpha,\beta) \phi\right) \leq J\left(\alpha u^{+}+\beta u^{-}\right)-\frac{1}{2} \varepsilon \eta(1,1)=J(u)-\frac{1}{2} \varepsilon.
$$
In any case, we have $J\left(\alpha u^{+}+\beta u^{-}+\varepsilon \eta(\alpha,\beta) \phi\right)<J(u)=m$. In particular, for $0<\delta<1-\varepsilon$,
$$
\sup _{\delta \leq\alpha,\beta\leq 2-\delta} J\left(\alpha u^{+}+\beta u^{-}+\varepsilon \eta(\alpha,\beta) \phi\right)=\tilde{m}<m.
$$
Let $v=\alpha u^{+}+\beta u^{-}+\varepsilon \eta(\alpha,\beta) \phi$. Define
$$
F(\alpha,\beta)=\left\langle J^{\prime}(v), v^{+}\right\rangle,\qquad G(\alpha,\beta)= \left\langle J^{\prime}(v), v^{-}\right\rangle.
$$
Since $u\in\mathcal{M}$, we have
\begin{equation}\label{3.7}
\begin{aligned}
\int_{V}\left(u^{+}\right)^{2} \log \left(u^{+}\right)^{2}\,d\mu= &\|u^{+}\|_{\mathcal{H}_s}^{2}-\frac{1}{2}K(u)-\int_{V}(u^+)^2\,d\mu,
\end{aligned}
\end{equation}
and 
\begin{equation}\label{4.7}
\begin{aligned}
\int_{V}\left(u^{-}\right)^{2} \log \left(u^{-}\right)^{2}\,d\mu= &\|u^{-}\|_{\mathcal{H}_s}^{2}-\frac{1}{2}K(u)-\int_{V}(u^-)^2\,d\mu.
\end{aligned}
\end{equation}
By the definition of $\eta$, for $\alpha=\delta<1-\varepsilon$ and $\beta \in(\delta, 2-\delta)$, we have $\eta(\alpha,\beta)=0$ and $\alpha<\beta$. Hence by (\ref{3.7}), we get that
$$
\begin{aligned}
F(\delta,\beta)=&\langle J^{\prime}(\delta u^++\beta u^{-}),\delta u^{+}\rangle\\=&\delta^{2}\left[\|u^{+}\|_{\mathcal{H}_s}^{2}-\int_{V}(u^+)^2\,d\mu-\int_{V}\left(u^{+}\right)^{2} \log \left(u^{+}\right)^{2}\,d\mu-\log \delta^2\int_{V}\left(u^{+}\right)^{2}\,d\mu\right]-\frac{\delta\beta}{2}K(u)\\ \geq &\delta^{2}\left[\|u^{+}\|_{\mathcal{H}_s}^{2}-\int_{V}(u^+)^2\,d\mu-\int_{V}\left(u^{+}\right)^{2} \log \left(u^{+}\right)^{2}\,d\mu-\log \delta^2\int_{V}\left(u^{+}\right)^{2}\,d\mu-\frac{1}{2}K(u)\right]\\=&-\delta^2\log \delta^2\int_{V}\left(u^{+}\right)^{2}\,d\mu\\>&0.
\end{aligned}
$$
For $\alpha=2-\delta>1+\varepsilon$ and $\beta \in(\delta, 2-\delta)$, we have $\eta(\alpha,\beta)=0$ and $\alpha>\beta$. Similarly, we get that
$$
\begin{aligned}
F(2-\delta,\beta)=&\langle J^{\prime}((2-\delta)u^++\beta u^{-}),(2-\delta)u^{+}\rangle\\=&(2-\delta)^{2}\left[\|u^{+}\|_{\mathcal{H}_s}^{2}-\int_{V}(u^+)^2\,d\mu-\int_{V}\left(u^{+}\right)^{2} \log \left(u^{+}\right)^{2}\,d\mu-\log (2-\delta)^2\int_{V}\left(u^{+}\right)^{2}\,d\mu\right]\\&-\frac{(2-\delta)\beta}{2}K(u)\\ \leq &(2-\delta)^{2}\left[\|u^{+}\|_{\mathcal{H}_s}^{2}-\int_{V}(u^+)^2\,d\mu-\int_{V}\left(u^{+}\right)^{2} \log \left(u^{+}\right)^{2}\,d\mu-\log (2-\delta)^2\int_{V}\left(u^{+}\right)^{2}\,d\mu \right]\\&-\frac{(2-\delta)^{2}}{2}K(u)
\\=&-(2-\delta)^2\log (2-\delta)^2\int_{V}\left(u^{+}\right)^{2}\,d\mu\\<& 0.
\end{aligned}
$$
Namely,
$$
F(\delta, \beta)>0,\qquad F(2-\delta, \beta)<0,\quad \beta\in(\delta, 2-\delta).
$$
By (\ref{4.7}) and similar arguments as above, we obtain that
$$
G(\alpha, \delta)>0,\qquad G(\alpha, 2-\delta)<0,\quad \alpha\in(\delta, 2-\delta).
$$
By the Miranda's theorem \cite{K}, there exists $\left(\alpha_0, \beta_0\right) \in(\delta, 2-\delta) \times(\delta, 2-\delta)$ such that $$\widetilde{u}=\alpha_0 u^{+}+\beta_0 u^{-}+\varepsilon \eta\left(\alpha_0, \beta_0\right) \phi \in\mathcal{M},$$ and $J(\widetilde{u})<m.$  This contradicts the definition of $m$.
We complete the proof.

\end{proof}

\
\

{ \bf Declaration}
The author declares that there are no conflict of interest regarding the publication of this paper.

\
\

{ \bf Data availability}
No data was used for the research described in this paper.

\
\

{\bf Appendix}

We give an example to show that there exists $u\in W^{s,2}(\mathbb{Z}^d)$ such that $\int_{\mathbb{Z}^d}u^2\log u^2\,d\mu=-\infty.$

We first recall the  fact that
$$
\sum_{n=2}^{\infty} \frac{1}{n(\log n)^{p}} 
\begin{cases}<+\infty, & \text { if } p>1, \\ =+\infty, & \text { if } 0 \leq p \leq 1.
\end{cases}
$$

Denote $e=(1,0,\cdots,0)\in\mathbb{Z}^d.$ Let
$$
u(x)= 
\begin{cases}
(|x|^{\frac{1}{2}} \log |x|)^{-1}, & x=ne, n\geq 3, \\ 0, & \text{rest}.
\end{cases}
$$
Then $u\in W^{s,2}(\mathbb{Z}^d)$ but $\int_{\mathbb{Z}^d}u^2\log u^2\,d\mu=-\infty.$ Indeed, a direct calculation yields that
$$
\int_{\mathbb{Z}^d} |u|^{2}\,d \mu=\sum_{x \in \mathbb{Z}^d} |u(x)|^{2}=\sum_{n=3}^{+\infty} \frac{1}{n(\log n)^{2}}<+\infty.
$$
By Lemma \ref{lb}, one gets that $u\in W^{s,2}(\mathbb{Z}^d)$. But for the term $\int_{\mathbb{Z}^d} u^2\log u^2\,d\mu$, we have
 $$
 \begin{aligned}
  \int_{\mathbb{Z}^d}u^2|\log u^2|\,d\mu=& \sum\limits_{x\in\mathbb{Z}^d}|u(x)|^2 |\log u(x)|^2\\ =&-\left(\sum_{n=3}^{+\infty}\frac{1}{n\log n}+\sum\limits_{n=3}^{+\infty}\frac{2\log(\log n)}{n(\log n)^2}\right)\\=&-\infty.
 \end{aligned}
 $$
\qed


\begin{thebibliography}{99}



\bibitem{AD} C.O. Alves, D.C. de Morais Filho, Existence and concentration of positive solutions for a Schr\"{o}dinger logarithmic equation. Z. Angew. Math. Phys. 69 (2018), no. 6, Paper No. 144, 22 pp.

\bibitem{AJ1} C.O. Alves, C. Ji,  Existence and concentration of positive solutions for a logarithmic Schr\"{o}dinger equation via penalization method. Calc. Var. Partial Differential Equations 59 (2020), no. 1, Paper No. 21, 27 pp.

\bibitem{A} A.H. Ardila, Existence and stability of standing waves for nonlinear fractional Schr\"{o}dinger equation with logarithmic nonlinearity. Nonlinear Anal. 155 (2017), 52-64.

\bibitem{CR0} O. Ciaurri, L. Roncal, P. Stinga, J. Torrea and J. Varona, Nonlocal discrete diffusion equations and the fractional discrete Laplacian, regularity and applications. Adv. Math. 330 (2018) 688-738.



\bibitem{CWY} X. Chang, R. Wang and D. Yan, Ground states for logarithmic Schr\"{o}dinger equations on locally finite graphs. J. Geom. Anal. 33 (2023), no.7, Paper No. 211.

\bibitem{CR} X. Chang, V.D. R$\breve{a}$dulescu,  R. Wang and D. Yan, Convergence of least energy sign-changing solutions for logarithmic Schr\"{o}dinger equations on locally finite graphs. Commun. Nonlinear Sci. Numer. Simul. 125 (2023), Paper No. 107418.


\bibitem{DS} P. d'Avenia, M. Squassina and M. Zenari, Fractional logarithmic Schr\"{o}dinger equations. Math. Methods Appl. Sci. 38 (2015), no. 18, 5207-5216.

\bibitem{FT} W. Feng, X. Tang and L. Zhang, Existence of a positive bound state solution for logarithmic Schr\"{o}dinger equation. J. Math. Anal. Appl. 531 (2024), no. 2, Paper No. 127861.




\bibitem {GLY} A. Grigor'yan, Y. Lin and Y.Yang,  Existence of positive solutions to
some nonlinear equations on locally finite graphs. Sci. China Math. 60 (2017), 1311-1324.

\bibitem{HL0} F. Han, L. Wang, Positive solutions to discrete harmonic functions in unbounded cylinders. J. Korean Math. Soc. 61 (2024), no. 2, 377-393. 

\bibitem{HJ} Z. He, C. Ji, Existence and multiplicity of solutions for the logarithmic Schr\"{o}dinger equation with a potential on lattice graphs. arXiv:2403.15866.

\bibitem{HL1} S. He, X. Liu, Multiple solutions for a class of fractional logarithmic Schr\"{o}dinger equations. 
Partial Differ. Equ. Appl. 2 (2021), no. 6, Paper No. 70, 30 pp.

\bibitem {HLW} B. Hua, R. Li and L. Wang, A class of semilinear elliptic equations on groups of polynomial growth. J. Differential Equations 363 (2023), 327-349.




\bibitem{J0} C. Ji, Multi-bump type nodal solutions for a logarithmic Schr\"{o}dinger equation with deepening potential well. Z. Angew. Math. Phys. 72 (2021), no. 2, Paper No. 70, 25 pp.

\bibitem{JS} C. Ji, A. Szulkin, 
A logarithmic Schr\"{o}dinger equation with asymptotic conditions on the potential. 
J. Math. Anal. Appl. 437 (2016), no. 1, 241-254.

\bibitem{JX} C. Ji, Y. Xue, Existence and concentration of positive solutions for a fractional logarithmic Schr\"{o}dinger equation. Differential Integral Equations 35 (2022), no. 11-12, 677-704.

\bibitem{K} W. Kulpa. The Poinca\'{e}-Miranda theorem. Am. Math. Mon. 104 (1997), 545-550.


\bibitem{LP} Q. Li, S. Peng and W. Shuai, On fractional logarithmic Schr\"{o}dinger equations. Adv. Nonlinear Stud. 22 (2022), no. 1, 41-66. 

\bibitem{LM} C. Lizama and M. Murillo-Arcila, On a connection between the $N$-dimensional fractional Laplacian and 1-D operators on lattices. J. Math. Anal. Appl. 511 (2022) 126051.



\bibitem{LR} C. Lizama, L. Roncal, H\"{o}lder-Lebesgue regularity and almost periodicity for semidiscrete equations with a fractional Laplacian. Discrete Contin. Dyn. Syst. Ser. A 38(3) (2018) 1365-1403.
 



\bibitem{OZ} X. Ou, X. Zhang,  Ground-state sign-changing homoclinic solutions for a discrete nonlinear $p$-Laplacian equation with logarithmic nonlinearity. Bound. Value Probl. (2024), Paper No. 6.



\bibitem{SY} M. Shao, Y. Yang and L. Zhao, 
Multiplicity and limit of solutions for logarithmic Schr\"{o}dinger equations on graphs.  
J. Math. Phys. 65 (2024), no. 4, Paper No. 041508.


\bibitem{SS} M. Squassina, A. Szulkin, Multiple solutions to logarithmic Schr\"{o}dinger equations with periodic potential. Calc. Var. Partial Differential Equations 54 (2015), no. 1, 585-597.


\bibitem{S0}  W. Shuai, Existence and multiplicity of solutions for logarithmic Schr\"{o}dinger equations with potential. J. Math. Phys. 62 (2021), no. 5, Paper No. 051501.


\bibitem{S}  A. Szulkin, Minimax principles for lower semicontinuous functions and applications to nonlinear boundary value problems. Ann. Inst. H. Poincar\'{e} Anal. Non Lin\'{e}aire 3 (1986), no. 2, 77-109.

\bibitem{WF} L. Wang, S. Feng,  and K. Cheng, Existence and concentration of positive solutions for a fractional Schr\"{o}dinger logarithmic equation. Complex Var. Elliptic Equ. 69 (2024), no. 2, 317-348.

 \bibitem{W0} J. Wang, Eigenvalue estimates for the fractional Laplacian on lattice subgraphs. arXiv: 2303.15766.
 
\bibitem{W1} L. Wang, Solutions to discrete fractional Sch\"{o}dinger equations. arXiv: 2308.09879.

\bibitem{W2} L. Wang, The ground state solutions of discrete nonlinear Schr\"{o}dinger equations with Hardy weights. Mediterr. J. Math. 21 (2024), no. 3, Paper No.78.

\bibitem{W3} L. Wang, Solutions to discrete nonlinear Kirchhoff-Choquard equations. Bull. Malays. Math. Sci. Soc. 47 (2024), no. 5, Paper No. 138.

\bibitem{XZ} M. Xiang, B. Zhang, Homoclinic solutions for fractional discrete Laplacian equations. Nonlinear Anal. 198 (2020), 111886, 15 pp.

\bibitem{ZLY}M. Zhang, Y. Lin and Y. Yang,
Fractional Laplace operator and related Schr\"{o}dinger equations on locally finite graphs. arXiv:2408.02902

\bibitem{ZZ} N. Zhang, L. Zhao,   Convergence of ground state solutions for nonlinear Schr\"{o}dinger equations on graphs. Sci. China Math. 61 (2018), 1481-1494.



\end{thebibliography}
\end{document}